\documentclass[a4paper,11pt]{article}

\usepackage{xcolor}
\usepackage[pdfencoding=unicode, hidelinks]{hyperref}
\hypersetup{
	colorlinks,
	linkcolor={red},
	citecolor={blue},
}

\usepackage[english]{babel}
\usepackage[utf8]{inputenc}
\usepackage{amsthm}
\usepackage{amsmath}
\usepackage{amssymb}
\usepackage{mathrsfs}
\usepackage[mathscr]{eucal}
\usepackage{lmodern}
\usepackage{wasysym}
\usepackage{textcomp}
\usepackage{enumerate}
\usepackage{setspace}
\usepackage{latexsym}
\usepackage{graphicx}
\usepackage{mathtools}
\usepackage{tabularray}
\usepackage{geometry}
\usepackage{float}
\usepackage{emptypage}
\usepackage{commath}
\usepackage{multicol}
\usepackage{titlesec}
\usepackage[shortlabels]{enumitem}
\usepackage{verbatim}
\usepackage{cases}
\usepackage{cleveref}
\usepackage{orcidlink}
\usepackage{authblk}
\usepackage{mathbbol}

\usepackage{csquotes}
\usepackage[backend=biber,
    style=alphabetic,
    citestyle=alphabetic,
    sorting=nyt, 
    doi=false,
    url=false,
    eprint=true,
    isbn=false,
    maxnames=6,
    abbreviate=true,
    giveninits=true]{biblatex}
\AtEveryBibitem{%
  \clearfield{note}%
}

\theoremstyle{plain}
\newtheorem{thm}{Theorem}[section]
\theoremstyle{plain}

\theoremstyle{plain}

\theoremstyle{plain}

\theoremstyle{plain}
\newtheorem{prop}[thm]{Proposition}
\theoremstyle{definition}
\newtheorem{remark}[thm]{Remark}

\numberwithin{equation}{section}

\newcommand{\RR}{\mathbb{R}}

\newcommand{\dx}{\,\ensuremath{\mathrm{d}}x}
\newcommand{\ds}{\,\ensuremath{\mathrm{d}}s}
\newcommand{\dt}{\,\ensuremath{\mathrm{d}}t}

\renewcommand{\norm}[1]{\lVert#1\rVert}
\renewcommand{\phi}{\varphi}
\newcommand{\duality}[2]{\langle#1,#2\rangle}
\newcommand{\dualprod}[2]{\langle#1,#2\rangle_{\Vs}}
\newcommand{\intprod}[2]{(#1,#2)_{\Hs}}

\newcommand{\Ws}{W}
 
\newcommand{\Vs}{V}
\newcommand{\Hs}{H}

\newcommand{\nnu}{\boldsymbol{\nu}}

\newcommand{\hbeta}{\widehat{\beta}}

\newcommand{\hpi}{\widehat{\pi}}
\newcommand{\hf}{\widehat{f}}

\newcommand{\lambdap}{\lambda_P}
\newcommand{\lambdaa}{\lambda_A}
\newcommand{\lambdae}{\lambda_E}
\newcommand{\lambdac}{\lambda_C}
\newcommand{\lambdab}{\lambda_B}
\newcommand{\lambdad}{\lambda_D}

\DeclareSymbolFontAlphabet{\mathbb}{AMSb}
\DeclareSymbolFontAlphabet{\mathbbm}{bbold}
\newcommand{\h}{\mathbbm{h}}
\renewcommand{\k}{\mathbbm{k}}

\newcommand{\difft}{\frac{\text{d}}{\text{d}t}}

\newcommand{\into}{\int_{\Omega}}
\newcommand{\intTo}{\int_0^T \!\!\!\!\into}
\newcommand{\inttTo}{\int_t^T \!\!\!\!\into}

\newcommand{\intto}{\int_0^t \!\!\!\into}
\newcommand{\intt}{\int_0^t}
\newcommand{\inttT}{\int_t^T}

\newcommand{\U}{\mathcal{U}}
\newcommand{\Uad}{\U_{\text{ad}}}
\newcommand{\Um}{\U_{M}}
\renewcommand{\S}{\mathcal{S}}

\newcommand{\zetah}{\zeta^{h}}
\newcommand{\xih}{\xi^{h}}
\newcommand{\etah}{\eta^{h}}
\newcommand{\rhoh}{\rho^{h}}
\newcommand{\thetah}{\theta^{h}}
\newcommand{\phih}{\phi^{h}}
\newcommand{\muh}{\mu^{h}}
\newcommand{\sigmah}{\sigma^{h}}
\newcommand{\Thetah}{\Theta^{h}}
\newcommand{\Phih}{\Phi^{h}}
\newcommand{\Upsilonh}{\Upsilon^{h}}
\newcommand{\Pih}{\Pi^{h}}
\newcommand{\q}{\mathrm{q}}
\renewcommand{\r}{\mathrm{r}}
\newcommand{\z}{\mathrm{z}}
\newcommand{\p}{\mathrm{p}}

\newcommand{\J}{\mathrm{J}}
\newcommand{\JJ}{\mathcal{J}}

\newcommand{\lhs}{left-hand side}
\newcommand{\rhs}{right-hand side}

\crefname{lemma}{lemma}{lemmas}
\Crefname{lemma}{Lemma}{Lemmas}
\crefname{prop}{proposition}{proposition}
\Crefname{prop}{Proposition}{Propositions}
\crefname{cor}{corollary}{corollaries}
\Crefname{cor}{Corollary}{Corollaries}
\crefname{remark}{remark}{remarks}
\Crefname{remark}{Remark}{Remarks}
\crefname{thm}{theorem}{theorems}
\Crefname{thm}{Theorem}{Theorems}
\Crefname{section}{Section}{Sections}

\begin{document}

\begin{center}
		
    {\Large \bf Optimal control for a fourth-order nonisothermal\\[2mm]
    tumor growth model of Caginalp type}

    \vskip0.5cm
    
    {\large\textsc{Giulia Cavalleri\orcidlink{0009-0006-4154-9659}$^1$}} \\
    {\normalsize e-mail: \texttt{cavalleri@wias-berlin.de}} \\
    \vskip0.35cm

    {\large\textsc{Pierluigi Colli\orcidlink{0000-0002-7921-5041}$^{2,3}$}} \\
    {\normalsize e-mail: \texttt{pierluigi.colli@unipv.it}} \\
    \vskip0.35cm

    {\large\textsc{Elisabetta Rocca\orcidlink{0000-0002-9930-907X}$^{2,3}$}} \\
    {\normalsize e-mail: \texttt{elisabetta.rocca@unipv.it}} \\
    \vskip0.35cm

    {\footnotesize $^1$Weierstrass Institute,  Anton-Wilhelm-Amo-Str 39, 10117 Berlin, Germany }
    \vskip0.1cm
  
    {\footnotesize $^2$Department of Mathematics ``F. Casorati'', University of Pavia, 27100 Pavia, Italy}
    \vskip0.1cm
    
    {\footnotesize$^3$Research Associate at the IMATI -- C.N.R. Pavia, via Ferrata 5, 27100 Pavia, Italy}
    \vskip0.5cm
		
\end{center}

\begin{abstract}
    \noindent We study a distributed optimal control problem for a nonisothermal Caginalp-type phase-field model that describes tumour growth under thermal therapy.  The PDE system couples a possibly viscous Cahn--Hilliard equation, governing the evolution of the healthy and tumor phases, with an equation for the heat balance, and a reaction-diffusion equation for the nutrient concentration. Chemotaxis and active transport effects are taken into account, and hyperthermia appears as a control variable. We introduce a suitable tracking-type cost functional and show the existence of optimal controls. Then, we analyse the differentiability of the control-to-state operator and establish necessary first-order conditions expressed through a variational inequality involving the adjoint state variables.  
    \vskip3mm
    \noindent {\bf Key words:} tumor growth model, Cahn--Hilliard system, nonisothermal model, optimal control, adjoint system,  necessary optimality conditions.
    \vskip3mm
    \noindent {\bf AMS (MOS) Subject Classification:} 
    35Q93, 
    49J20,  
    49K20, 
    35Q92, 
    92C50. 
\end{abstract}

\section{Introduction}
In this paper, we study an optimal control problem for a nonisothermal tumor growth model, which was introduced and proved to be well-posed in our previous work \cite{Cavalleri_Colli_Rocca_25}. Let $T>0$ be the final time and $\Omega \subseteq \RR^d$ be a sufficiently regular spatial domain in dimension $d=2,3$, having outward unit normal vector on $\partial \Omega$ given by $\nnu$. The domain $\Omega $ represents the portion of the body under investigation. The main variables whose evolution is studied are:
\begin{itemize}
    \item the relative temperature $\theta$ in $\Omega$;
    \item the phase-field variable $\phi$, describing the difference in volume fraction between tumor and healthy cells (in particular, the regions $\{\phi = 1\}$ and $\{\phi = -1\}$ correspond to tumor and healthy tissue, respectively), together with its associated chemical potential $\mu$;
    \item the concentration $\sigma$ of a nutrient (e.g., oxygen or glucose) dissolved in the blood.
\end{itemize}
For an assigned control $u$, representing a thermal therapy administered to the patient in the region under consideration, the corresponding state $(\theta, \phi, \mu, \sigma)$ is the unique solution of the PDE system
\begin{subequations}\label{eq:system}
    \begin{align}
        & \partial_t(\theta +\ell\phi) - \Delta \theta = u,\label{eq:temp}\\
        & \partial_t\phi - \Delta \mu = (\lambdap \sigma - \lambdaa - \lambdae \theta)\h(\phi),\label{eq:ch1}\\
        & \mu = \tau \partial_t \phi - \Delta \phi + \beta(\phi) + \pi(\phi) - \chi \sigma -\Lambda \theta, \label{eq:ch2}\\
        & \partial_t \sigma - \Delta (\sigma - \chi \phi) = - \lambdac \sigma \h(\phi) + \lambdab (\sigma_B - \sigma) - \lambdad \sigma \k(\theta), \label{eq:nutrient}
    \end{align}
\end{subequations}
posed in the parabolic cylinder $Q \coloneqq \Omega \times (0,T)$ and supplemented with no-flux boundary conditions
\begin{equation}\label{eq:boundary_cond}
    \partial_{\nnu} \theta = \partial_{\nnu} \phi = \partial_{\nnu} \mu = \partial_{\nnu} \sigma = 0
\end{equation}
on $\Sigma \coloneqq \partial \Omega \times (0,T)$, reflecting the idealised assumption that the portion $\Omega$ of the body is isolated from its surroundings, and with the initial conditions
\begin{equation}\label{eq:initial_datum}
     \theta(0) = \theta_0, \quad \phi(0) = \phi_0, \quad \sigma(0) = \sigma_0
\end{equation}
in $\Omega$. Here, the well-known (possibly viscous) Cahn--Hilliard 
reaction-diffusion subsystem \eqref{eq:ch1}--\eqref{eq:nutrient} governs the evolution of the tumor phase parameter $\phi$ and the nutrient 
concentration $\sigma$, and is coupled with the temperature equation \eqref{eq:temp}. In particular, the coupling between the relative 
temperature $\theta$ and the tumor variable $\phi$ is of Caginalp type (cf.~\cite{Caginalp_88}).
For the reader's convenience, we briefly recall the main features of the model, referring to the introduction of \cite{Cavalleri_Colli_Rocca_25} for a more comprehensive discussion.
The positive constants $\ell$ and $\Lambda$ are related to the latent heat of the tissue. 
We suppose that tumor cells proliferate by consumption of the nutrients and die because of apoptosis, with $\lambda_p$ and $\lambda_a$ being, respectively, the proliferation and apoptosis constant rates. Moreover, following \cite{Cavalleri_Colli_Rocca_25}, we assume that $\theta$ has cytotoxic effects proportional to the temperature, which we incorporate in the right-hand side of equation \eqref{eq:ch1} through the term $-\lambdae \theta$, where $\lambdae$ is a fixed positive parameter. 
The function $\h$, multiplying the mass source term in equation \eqref{eq:ch1}, guarantees that the phenomena
we have just described are proportional to the tumor cells available in a certain area. For
example, $\h$ may be a monotone increasing function which is 0 where $\{\phi = -1\}$ and 1 where $\{\phi = 1\}$. The nonnegative constant $\chi$ denotes a transport coefficient accounting for chemotaxis and active transport effects.
The term $\beta(\phi) + \pi(\phi)$ represents the derivative of a double-well potential. We allow for a fairly general polynomial growth, so that, in particular, the classical choice $\beta(\phi) + \pi(\phi) = \phi^3 - \phi$ is admissible. In contrast, singular potentials of logarithmic type as well as double-obstacle potentials fall outside the frame of the present analysis.
The parameter $\tau \geq 0$ may be either strictly positive or zero. When $\tau > 0$, the term~$\tau \partial_t \phi$ in \eqref{eq:ch2} acts as a viscous regularisation in the phase-field dynamics. The case $\tau = 0$ instead corresponds to the classical (non-viscous) Cahn--Hilliard system. Both regimes are addressed in our analysis. From a mathematical viewpoint, the presence of the viscous term improves the regularity of solutions, which may be advantageous in view of possible extensions to singular potentials.
Concerning the right-hand side of \eqref{eq:nutrient}, the term $- \lambdac \sigma \h(\phi)$ models nutrient consumption by tumor cells, with a higher consumption rate in regions where the tumor cell density is larger. The term $\lambdab (\sigma_B - \sigma)$ describes the supply of nutrients from the pre-existing vasculature. Finally, the function $\k$ accounts for the influence of temperature on nutrient uptake, for instance, through vasodilation effects.

\paragraph{The control problem.} Within this framework, a well-chosen heat source $u$ could drive the system toward a tumor reduction or, more generally, toward a desired evolution or final state. We introduce some target functions $\theta_Q, \phi_Q$, and $\theta_{\Omega}, \phi_\Omega$ representing, respectively, the desired evolution of the temperature and of the phase parameter, and the desired temperature and tumor proportion at the final time $T$. For example, the target $\phi_\Omega$ could be a suitable configuration for surgery. Then, we define the cost functional 
\begin{equation}\label{def:J}
    \begin{split}
        \JJ(\theta, \phi, u) \coloneqq \ 
        &\frac{b_1}{2} \int_Q |\theta - \theta_Q|^2 \dx \dt + \frac{b_2}{2} \into |\theta(T) - \theta_{\Omega}|^2 \dx\\ 
        & + \frac{b_3}{2} \int_Q |\phi - \phi_Q|^2 \dx \dt + \frac{b_4}{2} \into |\phi(T) - \phi_{\Omega}|^2 \dx\\ 
        & + \frac{b_5}{2} \int_Q |u|^2 \dx \dt .
    \end{split}
\end{equation}
The first four terms in \eqref{def:J} are of standard tracking-type, while the last regularising term prevents the administration of an excessive amount of therapy, thereby avoiding damage to other vital organs. 
Firstly, we aim at minimising the cost functional $\JJ$ subject to the PDE system \eqref{eq:system}--\eqref{eq:initial_datum} and to the following admissible control set
\begin{equation}
    \Uad \coloneqq \left\{ u \in \U \, | \, u_{\textrm{min}} \leq u \leq u_{\textrm{max}} \text{ a.e. in } Q\right\},
\end{equation}
where $\U \coloneqq L^{\infty}(Q)$, and $u_{\textrm{min}}, u_{\textrm{max}}$ are assigned elements of $\U$ satisfying $u_{\textrm{min}} \leq u_{\textrm{max}}$ almost everywhere in $Q$. Notice that $\Uad$ is a nonempty, closed, bounded and convex subset of $\U$. In particular, there exists a constant $M>0$ such that $\Uad \subseteq \Um$, where the open set $\Um$ is given~by
\begin{equation}
\label{betti1}
    \Um \coloneqq \left\{ u \in \U \,\middle|\, \norm{u}_{L^{\infty}(Q)} < M \right\}.
\end{equation}
Rephrasing the optimal control problem more compactly, it reads as follows:
\begin{equation}\label{eq:optimal_control}
        \min_{u\in \Uad} \JJ(\theta,\phi,u).
\end{equation}
The subsequent, more challenging, goal is to characterise the optimal controls, i.e., the minimisers, through their necessary first-order optimality conditions.

\paragraph{Related literature.} Let us recall some of the literature related to tumor growth models based on Cahn--Hilliard type dynamics, which is expanding rapidly, reflecting the strong interest it has attracted from both pure and applied mathematicians. In most of the previous papers on the subject, the isothermal case was considered,  assuming, indeed, constant temperature.
The reader can refer, e.g., to \cite{Frigeri_Grasselli_Rocca_15, Garcke_Lam_Sitka_Styles_16, Garcke_Lam_17, Ebenbeck_Garcke_Nurnberg_21} and reference therein, where the nutrient dynamic is taken into account by means of a reaction-diffusion equation analogous to our \eqref{eq:nutrient}. Moreover, various works also incorporate the evolution of other relevant quantities like elastic effects, and fluid flows (see, e.g., \cite{Colli_Gomez_Lorenzo_etal_20, Cavalleri_Colli_Miranville_Rocca_2026, Garcke_Lam_Signori_21, Garcke_Kovacs_Trautwein_22, Garcke_Lam_Sitka_Styles_16, Knopf_Signori_22} and the references therein). 
On the other hand, nonisothermal phase-field models have also been widely investigated in the literature, although not in the context of tumor growth. In this regard, we mention, e.g., \cite{DeAnna_etal_24, Miranville_Schimperna_05,heinemann_kraus_rocca_rossi_2017, Lasarzik_21, Colli_Gilardi_Signori_Sprekels_24}. The recent paper \cite{gatti-ipocoana-miranville-25} is devoted to the analysis of a nonisothermal tumor growth model in which the phase evolution is governed by an Allen--Cahn dynamics.
However, to the best of our knowledge, only the recent contributions \cite{Cavalleri_Colli_Rocca_25, Ipocoana_22} incorporate temperature effects into a tumor model of Cahn--Hilliard type.
In the paper~\cite{Ipocoana_22}, the PDE system is rigorously derived in terms of an entropy balance, but only the existence of weak solutions for the associated initial-boundary value problem is proved. Hence, the main objective of our previous contribution \cite{Cavalleri_Colli_Rocca_25} has been to address a different system going beyond proving the existence of weak solutions and establishing higher regularity estimates on the solution as well as continuous dependence on the data, which pave the way to the analysis performed in the present paper on the associated optimal control problem.

Regarding the problem of distributed and boundary optimal control for diffuse interface tumor growth problems coupling Cahn--Hilliard type dynamics for the tumor with reaction-diffusion equations for the nutrient, we can quote, e.g., the papers  \cite{Cavaterra_Rocca_Wu_21, Colli_Gilardi_Rocca_Sprekels_17, Garcke_Lam_Rocca_18}. Control strategies incorporating chemotaxis, active transport, variable
mobilities, and Keller--Segel dynamics were developed in \cite{Abatangelo_Cavaterra_Grasselli_Wu_24, Agosti_Signori_24, Colli_Gilardi_Signori_Sprekels_25, Colli_Signori_Sprekels_21, Ebenbeck_Knopf_20, Ebenbeck_Knopf_19}. Further contributions
included optimal control for nonlocal phase-field type tumor growth models  (where the Cahn--Hilliard equation is replaced by its nonlocal variant), e.g., in \cite{Fornoni_24a, Fornoni_24b}. 
Concerning optimal control for non-isothermal phase-field models of Caginalp--Cahn--Hilliard type, there are several contributions in the literature dealing with distributed and boundary control problems. The reader can refer, e.g., to \cite{Colli_Gilardi_Signori_Sprekels_23a, Colli_Gilardi_Signori_Sprekels_23b} and reference therein.
However, as far as we are aware, this is the first contribution where an optimal control problem is introduced for a phase-field system where temperature, phase, and nutrient dynamics are coupled.

\paragraph{Plan of the paper.} The paper is organized as follows.
In Section~\ref{sec:notation}, we introduce the notation and collect some standard inequalities and auxiliary results that will be used repeatedly throughout the paper. In Section~\ref{sec:hyp}, we state the assumptions on the problem data.  In Section~\ref{sec:optim}, we recall the results of \cite{Cavalleri_Colli_Rocca_25} related to well-posedness and regularity of the state system \eqref{eq:system}--\eqref{eq:initial_datum} and prove the existence of optimal control. In Sections~\ref{sec:lin}, \ref{sec:diff}, we prove well-posedness of the associated linearised system and the Fr\'echet differentiability of the control-to-state mapping. Finally, in the last Section~\ref{sec:first} we prove first-order necessary optimality conditions in terms of the adjoint system.

\section{Notation and preliminaries}
\label{sec:notation}
Throughout the paper, we consider a bounded domain $\Omega$  of class $C^2$ in $\RR^d$ with $d = 2,3$, and we fix a final time $T > 0$.
We recall that $Q=\Omega \times (0,T) $ and $\Sigma=\partial \Omega \times (0,T) $.
If $(X,\norm{\cdot}_X)$ is a Banach space, we denote by $(X^*,\norm{\cdot}_{X^*})$ its (topological) dual. We write $L^p(\Omega)$ and $W^{k,p}(\Omega)$ for the usual Lebesgue and Sobolev spaces defined on $\Omega$. In the Hilbert case $p=2$, we adopt the standard notation $H^k(\Omega) := W^{k,2}(\Omega)$.
For Bochner spaces of the form $W^{k,p}(0,T;X)$, the norm will be denoted by $\norm{\cdot}_{W^{k,p}(X)}$, omitting the explicit reference to the time interval $(0,T)$ whenever no confusion arises. Should a different final time be considered, it will be specified explicitly.
We denote by $C^0([0,T];X)$ the space of continuous functions from $[0,T]$ into $X$, and by $C^0_w([0,T];X)$ the space of functions $v \in L^\infty(0,T;X)$ that are weakly continuous on $[0,T]$ with values in $X$.
For later use, we introduce the spaces
\begin{equation*}
    W := \{ v \in H^2(\Omega) \,:\, \partial_{\nnu} v = 0 \text{ on } \partial\Omega \}, 
    \qquad V := H^1(\Omega), 
    \qquad H := L^2(\Omega).
\end{equation*}
The inner product in $H$ is denoted by $\intprod{\cdot}{\cdot}$, while $\dualprod{\cdot}{\cdot}$ stands for the duality pairing between $V^*$ and $V$. As customary, we identify $H$ with its dual $H^*$ and regard it as a subspace of $V^*$ via
\begin{equation*}
    \dualprod{w}{v} = \intprod{w}{v}
\end{equation*}
for all $w \in H$ and $v \in V$.
We now collect some useful tools that will be employed in the sequel. Besides H\"older's inequality, we shall frequently use Young's, Sobolev's, and Poincar\'e's inequalities, as well as some estimates stemming from elliptic regularity theory and from the compact embeddings $V \hookrightarrow L^p(\Omega)$ for every $p \in [1,6)$ and $W \hookrightarrow V$.
In particular, the following inequalities hold:
    \begin{align}
        &ab \le \delta a^2 + \frac{1}{4\delta} b^2 \quad \text{for every } a,b \in \mathbb{R}, \label{pier1}\\
        &\|v\|_{L^p(\Omega)} \le C_\Omega \|v\|_{\Vs} \quad \text{for every } p \in [1,6], \label{pier2}\\
        &\|v\|_{\Vs} \le C_\Omega \big( \|\nabla v\|_{\Hs} + |\overline{v}| \big), \label{pier3}\\
        &\|w\|_{\Ws} \le C_\Omega \big( \|\Delta w\|_{\Hs} + \|w\|_{\Hs} \big), \label{pier4}\\
        &\|v\|_{L^p(\Omega)} \le \delta \|\nabla v\|_{\Hs} + C_{\Omega,p,\delta} \|v\|_{\Hs}\quad \text{for every } p \in [1,6), \label{pier5}\\
        &\|w\|_{\Vs} \le \delta \|\Delta w\|_{\Hs} + C_{\Omega,\delta} \|w\|_{\Hs}, \label{pier6}\\
        &\|w\|_{L^\infty(\Omega)} \le \delta \|\Delta w\|_{\Hs} + C_{\Omega,\delta} \|w\|_{\Hs}, \label{pier7}
    \end{align}
for every $v \in \Vs$, $w \in \Ws$, and $\delta > 0$.
The constants $C_\Omega$ depend only on $\Omega$, while $C_{\Omega,\delta}$ may also depend on $\delta$. 
Here, $\overline{v}$ denotes the mean value of $v$. 
More generally, let us define the generalized mean value $\overline{v}$ of a generic element $v \in \Vs^*$ by setting
\begin{equation*}
    \overline{v} := \frac{1}{|\Omega|} \langle v, 1 \rangle_{\Vs}, \label{pier8}
\end{equation*}
where $1$ denotes the constant function equal to $1$ in $\Omega$.  Note that  \eqref{pier8} makes sense since $1 \in \Vs$, and that $\overline{v}$ reduces to the usual mean value when $v \in \Hs$. The same notation $\overline{v}$ is also used when $v$ is time-dependent.
The notation $*_t$ denotes the convolution with respect to time, i.e., 
\begin{equation*}
   ( u*_t v ) (t) \coloneqq \intt u(t-s) v(s) \ds , \quad t\in (0,T). \label{pier9} 
\end{equation*}
Finally, we point out that the symbol $C$ denotes a generic positive constant depending only on the structural data of the problem; its value may change from line to line. If it is necessary to emphasise the dependence on a parameter, say $\delta>0$, we write $C_\delta$. Specific constants to which we refer explicitly are denoted in different ways.

\section{Hypotheses}
\label{sec:hyp} 

We now enlist the assumptions on the assigned parameters, the nonlinear terms, and the prescribed data that will be in force throughout the paper.

\begin{enumerate}[\rm{(H\arabic*)}]
\item \label{hyp:constants}
We assume that 
\begin{align}
    & \ell, \Lambda, \chi \text{ are positive constants},\\
    & \tau \text{ and }\lambdap, \lambdaa, \lambdae, \lambdac, \lambdab, \lambdad  \text{ are real nonnegative constants}.
\end{align}
\item \label{hyp:given_function}
The assigned function $\sigma_B$ enjoys the regularity
\begin{align}
    & \sigma_B \in L^2(Q).
\end{align} 
\item \label{hyp:nonlinearities}
Regarding the nonlinearities $\h$ and $\k$, we require that
\begin{align}
    & \h, \k \in C^{0,1}(\RR) \cap C^2(\RR), \text{ and}\\
    &0 \leq \h \leq \h^*,\ 0 \leq \k \leq \k^*,
\end{align}
for two nonnegative constants $\h^*,\ \k^*$.
\item \label{hyp:beta_pi}
We consider a potential $\hf = \hbeta + \hpi$ split into the sum of a convex part and a nonconvex perturbation. We denote its derivative as $f \coloneqq \beta + \pi$ where $\beta \coloneqq \hbeta'$ and $\pi \coloneqq \hpi'$. We suppose:
\begin{align}
     & \hbeta, \,  \hpi \in C^2(\RR), \label{hyp:hatf_reg} \\
     & \hbeta \text{ is convex and nonnegative,}\label{hyp:hatbeta}
\end{align}
and that the following growth conditions hold:
\begin{align}
    & |\beta(r)| \leq C_{\beta} (\hbeta(r) + 1),
    \label{hyp:growth_beta}\\[1mm]
    & |\pi'(r)| \leq C_{\pi}
    \label{hyp:pi_lip}
\end{align}
for all $r \in \RR$, where $C_{\beta}$, $C_{\pi}$ are given nonnegative constants. 
\end{enumerate}

\begin{remark}
    While the decomposition of the potential $\hf$, together with the assumptions \eqref{hyp:hatbeta}--\eqref{hyp:pi_lip} on $\hbeta$ and $\hpi$, plays a crucial role in the proof of well-posedness of the state system in \cite{Cavalleri_Colli_Rocca_25}, in the present paper we systematically adopt the more compact notation $\hf$ and $f$.
    The reason is that, once existence and uniqueness of the solution have been established and the state variable $\phi$ is known to belong to $L^{\infty}(Q)$ (cf. \Cref{thm:wellposedness}), the local Lipschitz continuity of $f$, which follows from its regularity assumption \eqref{hyp:hatf_reg}, is sufficient to justify all subsequent estimates and calculations. Therefore, the finer splitting of the potential is no longer needed at the level of the arguments developed here.
\end{remark}
\begin{enumerate}[\rm{(H\arabic*)},resume]
\item \label{hyp:init_cond}
The initial data have the following regularity:
\begin{equation}
    \theta_0 \in \Vs \cap L^{\infty}(\Omega),\quad \phi_0 \in \Ws \cap H^3(\Omega), \quad \sigma_0 \in \Vs.
\end{equation}
\end{enumerate}

\noindent The assumptions for the components of the cost functional are the following.
\begin{enumerate}[\rm{(H\arabic*)},resume]
\item \label{hyp:cost-fun}
The cofficients $b_1, b_2, b_3, b_4$ are real nonnegative, whereas $b_5$ is positive. In addition, we suppose that 
\begin{equation}
    \theta_Q , \phi_Q \in  L^{2}(Q),\quad \theta_{\Omega} , \phi_{\Omega} \in \Vs .
\end{equation}
\end{enumerate}%

\section{The optimal control problem}
\label{sec:optim}

The well-posedness of the state system \eqref{eq:system}--\eqref{eq:initial_datum} has been previously studied in the contribution \cite{Cavalleri_Colli_Rocca_25}, to which we refer for further details on the matter. For the reader's convenience, the following theorem collects the main results in~\cite{Cavalleri_Colli_Rocca_25}. 
 \begin{thm}\label{thm:wellposedness}
    Assume that hypotheses \ref{hyp:constants}--\ref{hyp:init_cond} hold. Then, for every $u \in \Um$, the PDE system~\eqref{eq:system}--\eqref{eq:initial_datum} has a unique strong solution $(\theta, \phi, \mu, \sigma)$
    belonging to the following spaces
    \begin{gather}
        \theta \in  H^1(0,T;\Hs) \cap C^0([0,T];\Vs)  \cap L^2(0,T;\Ws) \cap L^{\infty}(Q),\label{pcol1}\\  
        \phi \in   W^{1,\infty}(0,T;\Vs') \cap  H^1(0,T;\Vs) \cap  L^{\infty}(0,T;\Ws) ,\label{pcol2}\\
        \mu \in L^{\infty}(0,T;\Vs) \cap  L^2(0,T;\Ws),\label{pcol3}\\
        \sigma \in  H^1(0,T;\Hs) \cap  C^0([0,T];\Vs) \cap  L^2(0,T;\Ws), \label{pcol14}
    \end{gather}
    which satisfies the estimate
    \begin{equation}\label{eq:solution_estimate}
        \begin{split}
            &\norm{\theta}_{H^1(\Hs)\cap L^\infty(\Vs) \cap L^2(\Ws) \cap L^{\infty}(Q)} + \norm{\phi}_{W^{1,\infty}(\Vs')\cap H^1(\Vs) \cap  L^{\infty}(\Ws)}\\
            &\quad+ \norm{\mu}_{L^{\infty}(\Vs) \cap  L^2(\Ws)} + \norm{\sigma}_{H^1(\Hs) \cap L^{\infty}(\Vs) \cap L^2(\Ws)} \leq C_M.
        \end{split}
    \end{equation}
    Moreover, for every couple of controls $\{u_i\}_{i=1,2} \subseteq \Um$ the corresponding strong solutions $\{(\theta_i, \phi_i, \mu_i, \sigma_i)\}_{i=1,2}$ satisfy the following continuous dependence inequality
    \begin{equation}
        \begin{split}
            &\norm{\theta_1-\theta_2}_{L^{\infty}(\Hs) \cap L^2(\Vs)} + \norm{\phi_1-\phi_2}_{L^{\infty}(\Hs)\cap L^2(\Ws)} + \norm{\tau^{1/2}(\phi_1-\phi_2)}_{L^{\infty}(\Vs)}\\
            &\quad \quad + \norm{\mu_1 - \mu_2}_{L^2(\Hs)} + \norm{\tau^{1/2}(\nabla\mu_1 - \nabla \mu_2)}_{L^2(\Hs)} + \norm{\sigma_1-\sigma_2}_{L^{\infty}(\Hs)\cap L^2(\Vs)}\\
            &\quad \leq C_{M} \norm{u_1-u_2}_{L^2(\Hs)}.
        \end{split}
        \label{eq:cont_dep}
    \end{equation}
 \end{thm}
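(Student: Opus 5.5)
Since this theorem collects results from \cite{Cavalleri_Colli_Rocca_25}, the plan is to reconstruct their proof in outline. Existence comes from a Faedo--Galerkin approximation, combined if needed with a Yosida regularisation of $\beta$, followed by a priori estimates uniform in the approximation and a compactness argument. Uniqueness and \eqref{eq:cont_dep} then follow from a contraction-type energy estimate on the difference of two solutions.

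\textbf{Basic energy estimate.} We test \eqref{eq:temp} by $\theta$ and \eqref{eq:ch1} by $\mu$. We test \eqref{eq:ch2} by $\partial_t\phi$, multiplied by $\Lambda/\ell$ so that the Caginalp coupling terms $\ell\partial_t\phi\,\theta$ and $\Lambda\theta\partial_t\phi$ cancel. We test \eqref{eq:nutrient} by $\sigma$ and add everything. The chemotaxis terms $\chi\sigma\partial_t\phi$ and $\chi\nabla\phi\cdot\nabla\sigma$ are handled with Young's inequality, using $\|\partial_t\phi\|_{V^*}\le C\|\nabla\mu\|_H+C$ from \eqref{eq:ch1} and the boundedness of $\h,\k$. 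The mean of $\mu$ is controlled through \eqref{hyp:growth_beta}, via the standard $\overline\mu$ argument obtained by testing \eqref{eq:ch2} by $1$. Gronwall's lemma then gives bounds in $L^\infty(H)$, $L^2(V)$ for $\theta,\sigma$, in $L^\infty(V)$ for $\phi$, and in $L^2(V)$ for $\mu$.

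\textbf{Higher regularity.} We test \eqref{eq:temp} and \eqref{eq:nutrient} by $\partial_t\theta$ and $\partial_t\sigma$ (or by $-\Delta$), giving $H^1(H)\cap L^\infty(V)\cap L^2(W)$. Next we differentiate \eqref{eq:ch1}--\eqref{eq:ch2} in time and test by $\mathcal N\partial_t\phi$ and $\partial_t\phi$, where $\mathcal N$ is the inverse Neumann Laplacian on mean-free functions; the mean of $\partial_t\phi$ is handled separately. Here the regularity $\phi_0\in W\cap H^3$ is used to bound $\partial_t\phi(0)$ and $\mu(0)$, and the result is $\phi\in W^{1,\infty}(V^*)\cap H^1(V)$. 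Elliptic regularity \eqref{pier4} applied to \eqref{eq:ch2} then yields $\phi\in L^\infty(W)$, so $\phi\in L^\infty(Q)$ in $d\le3$, and hence $\mu\in L^\infty(V)\cap L^2(W)$. The bound $\theta\in L^\infty(Q)$ comes from viewing \eqref{eq:temp} as a heat equation with right-hand side $u-\ell\partial_t\phi\in L^\infty(H)\cap L^2(V)$, using $\theta_0\in L^\infty$ and parabolic regularity (e.g.\ a Moser/De Giorgi or $L^p$ maximal regularity argument with $p$ large). I expect this $L^\infty$ bound on $\theta$ to be the main obstacle, since the source $\partial_t\phi$ is only in $L^\infty(H)$. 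I would first try maximal $L^p$ regularity with $\partial_t\phi\in L^\infty(0,T;L^6)$, obtained by interpolating the $H^1(V)$ and $L^\infty(H)$ information on $\partial_t\phi$. All these bounds depend on $u$ only through $M$.

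\textbf{Uniqueness and continuous dependence.} Write the system for the differences. We test the temperature equation by $\theta$, the phase equation by $\mathcal N(\phi-\overline\phi)$ or by $\mu$-type quantities, and \eqref{eq:ch2} by $\phi$ (and, when $\tau>0$, also by $\partial_t\phi$-type quantities to get the $\tau^{1/2}$ terms). The nutrient equation is tested by $\sigma$, with the same $\Lambda/\ell$ scaling to cancel the coupling. The nonlinear terms $f,\h,\k$ are Lipschitz on the bounded ranges given by the $L^\infty(Q)$ bounds. Gronwall's lemma then yields \eqref{eq:cont_dep}, and uniqueness follows by taking $u_1=u_2$.
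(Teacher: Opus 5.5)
\textbf{There is a genuine gap: your sketch never bounds the $\mu$-terms in \eqref{eq:cont_dep}, which is the only part of the statement the paper actually proves.} Your outline of existence and regularity is a plausible reconstruction of \cite{Cavalleri_Colli_Rocca_25}, but the paper does not redo any of it. It quotes Theorems 3.5, 3.6 and 3.8 there for everything except the bound on $\norm{\mu_1-\mu_2}_{L^2(\Hs)}+\norm{\tau^{1/2}\nabla(\mu_1-\mu_2)}_{L^2(\Hs)}$, and proves only that bound.

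Your continuous-dependence paragraph does not deliver it, for three reasons.
\begin{itemize}
\item With the pairing you describe, \eqref{eq:ch1} tested by $\mathcal N(\phi-\overline\phi)$ and \eqref{eq:ch2} by $\phi$, the chemical potential cancels, so Gronwall gives no information on $\mu_1-\mu_2$. The same happens with the pairing $\phi$, $-\Delta\phi$ that the paper uses in \eqref{eq:diff_est1_2}.
\item The $\mathcal N$-pairing also yields $\phi_1-\phi_2$ only in $L^\infty(\Vs^*)\cap L^2(\Vs)$, which is weaker than the $L^\infty(\Hs)\cap L^2(\Ws)$ that \eqref{eq:cont_dep} requires.
\item For $\tau=0$ you do not test \eqref{eq:ch2} by $\partial_t\phi$. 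Then the term $\ell\langle\partial_t\phi,\theta\rangle$ produced by testing \eqref{eq:temp} by $\theta$ is uncompensated and cannot be bounded without control of $\nabla\mu$. The paper avoids this in its analogous estimates (\eqref{eq:lin_est1_1}, \eqref{eq:diff_est1_1}) by integrating the temperature equation in time first.
\end{itemize}

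The missing idea is short. Add $\tau$ times the difference of \eqref{eq:ch1} to the difference of \eqref{eq:ch2}. Then $\tau\partial_t\phi$ drops out and you get the elliptic identity $\mu-\tau\Delta\mu=\tau S-\Delta\phi+f(\phi_1)-f(\phi_2)-\chi\sigma-\Lambda\theta$. Here all variables are differences and $S$ is the difference of the right-hand sides of \eqref{eq:ch1}. Test this by $\mu$, using the Lipschitz bounds of $\h$ and $f$ on the bounded ranges of $\phi_i$ and the bound $\sigma_2\in L^\infty(0,T;L^4(\Omega))$. This gives $\tau\norm{\nabla\mu}^2_{L^2(\Hs)}+\norm{\mu}^2_{L^2(\Hs)}\le C(1+\tau^2)\bigl(\norm{\sigma}^2_{L^2(\Hs)}+\norm{\theta}^2_{L^2(\Hs)}\bigr)+C\norm{\phi}^2_{L^2(\Ws)}$. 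The right-hand side is already controlled by \eqref{eq:con_dep_pier}, and no Gronwall argument is needed. A consistent energy pairing would also control $\nabla(\mu_1-\mu_2)$, but you would have to carry it out explicitly: \eqref{eq:ch1} by $\mu$, \eqref{eq:ch2} by $\partial_t\phi$ for every $\tau\ge0$, \eqref{eq:temp} by $(\Lambda/\ell)\theta$, and \eqref{eq:nutrient} by a large multiple of $\sigma$.

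\textbf{A second, smaller flaw is your justification of $\theta\in L^\infty(Q)$.} For $\tau=0$ the theorem only gives $\partial_t\phi\in L^\infty(0,T;\Vs^*)\cap L^2(0,T;\Vs)$, so $u-\ell\partial_t\phi\in L^\infty(0,T;\Hs)$ is not available. Nor can you interpolate $L^2(0,T;\Vs)$ with $L^\infty(0,T;\Hs)$ to reach $L^\infty(0,T;L^6(\Omega))$; that would need $\partial_t\phi\in L^\infty(0,T;\Vs)$.

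The step is easier than you fear, though. Already $\partial_t\phi\in L^2(0,T;L^6(\Omega))$ satisfies the classical boundedness condition $\frac1r+\frac{d}{2q}<1$, since $\frac12+\frac14<1$ for $d=3$. So De Giorgi or Ladyzhenskaya-type estimates give $\theta\in L^\infty(Q)$ from $\theta_0\in L^\infty(\Omega)$ and $u\in L^\infty(Q)$. Alternatively, $w=\theta+\ell\phi$ solves $\partial_t w-\Delta w=u-\ell\Delta\phi$ with $\Delta\phi\in L^\infty(0,T;\Hs)$ and $w(0)\in L^\infty(\Omega)$, and $\phi\in L^\infty(Q)$ is already known. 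If you use maximal $L^p$-regularity instead, you must split off the initial datum, because $\theta_0\in \Vs\cap L^\infty(\Omega)$ is not in the relevant trace space.
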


\begin{proof}
For the existence and regularity of a solution $(\theta, \phi, \mu, \sigma)$ satisfying \eqref{pcol1}--\eqref{pcol14}
and \eqref{eq:solution_estimate} we refer to \cite[][Theorems~3.5 and 3.6]{Cavalleri_Colli_Rocca_25}. Part of the continuous dependence estimate~\eqref{eq:cont_dep} has been established in \cite[][Theorem 3.8]{Cavalleri_Colli_Rocca_25}, namely the following  
\begin{equation}
        \begin{split}
            &\norm{\theta_1-\theta_2}_{L^{\infty}(\Hs) \cap L^2(\Vs)} + \norm{\phi_1-\phi_2}_{L^{\infty}(\Hs)\cap L^2(\Ws)} + \norm{\tau^{1/2}(\phi_1-\phi_2)}_{L^{\infty}(\Vs)}\\
            &\quad \quad  + \norm{\sigma_1-\sigma_2}_{L^{\infty}(\Hs)\cap L^2(\Vs)} \leq C_{M} \norm{u_1-u_2}_{L^2(\Hs)}.
        \end{split}
        \label{eq:con_dep_pier}
    \end{equation}
    We now prove the full \eqref{eq:cont_dep} for the couple of controls $\{u_i\}_{i=1,2} \subseteq \Um$ and the corresponding states $\{(\theta_i, \phi_i, \mu_i, \sigma_i)\}_{i=1,2}$. For the sake of brevity, we introduce the notation
    \begin{equation*}
        (\theta, \phi, \mu, \sigma) \coloneqq (\theta_1 - \theta_2, \phi_1 - \phi_2, \mu_1 - \mu_2, \sigma_1 - \sigma_2).
    \end{equation*}
    For each solution, we sum the equation \eqref{eq:ch1} multiplied by $\tau \geq 0$ and the equation \eqref{eq:ch2}. Then, we take the difference of the resulting equations, test it by $\mu$, and integrate over the time interval $(0,T)$. We obtain:
    \begin{equation*}
        \begin{split}
            &\tau\intTo |\nabla \mu|^2 \dx \dt + \intTo |\mu|^2 \dx \dt\\
            &\quad = \tau \intTo \left[(\lambdap \sigma - \lambdae \theta)\h(\phi_1) + (\lambdap \sigma_2  - \lambdaa - \lambdae \theta_2)\big(\h(\phi_1) - \h(\phi_2)\big)\right]\mu \dx \dt\\
            & \quad \quad + \intTo \left[-\Delta \phi + \big(f(\phi_1)-f(\phi_2)\big) - \chi\sigma - \Lambda \theta \right] \mu \dx \dt.
        \end{split}
    \end{equation*}
    We bind the right-hand side from above, exploiting boundedness and Lipschitz continuity of $\h$ from hypothesis \ref{hyp:nonlinearities}, local Lipschitz continuity of $f$ from \ref{hyp:beta_pi}, as well as uniform boundedness of $\theta_1, \theta_2$ and $\phi_1, \phi_2$ from \Cref{thm:wellposedness}. Thus, we make use of the H\"older inequality---also recalling that $\sigma_2$ is uniformly bounded in $L^{\infty}(0,T;\Vs) \hookrightarrow L^{\infty}(0,T;L^4(\Omega))$--- followed by the Young inequality:
        \begin{equation*}
        \begin{split}
            &\tau\intTo |\nabla \mu|^2 \dx \dt + \intTo |\mu|^2 \dx \dt\\
            & \quad  \leq C \intTo \left[(\tau + 1) \big(|\sigma| + |\theta|\big) + |-\Delta \phi| + \big( |\sigma_2| + 1 \big)|\phi|\right] |\mu| \dx \dt\\
            &\quad \leq C \int_0^T\left[ (\tau + 1) \big(\norm{\sigma}_{\Hs} + \norm{\theta}_{\Hs}\big) + \norm{-\Delta \phi}_{\Hs} + \big(\norm{\sigma_2}_{L^4(\Omega)} + 1\big)\norm{\phi}_{L^4(\Omega)}\right]\norm{\mu}_{\Hs} \dt\\
            & \quad \leq \frac{1}{2} \int_0^T \norm{\mu}_{\Hs}^2 \dt + C \int_0^T \left[(\tau^2 + 1)\big(\norm{\sigma}_{\Hs}^2 + \norm{\theta}_{\Hs}^2 \big) + \norm{\phi}_{\Ws}^2 \right] \dt.
        \end{split}
    \end{equation*}
    At this point, in view of \eqref{eq:con_dep_pier} we deduce that
    \begin{equation*}
       \begin{split}
           \tau \norm{\nabla \mu}_{L^2(\Hs)}^2 + \norm{\mu}_{L^2(\Hs)}^2 &\leq  C(\tau^2 + 1) \big(\norm{\sigma}_{L^2(\Hs)}^2 + \norm{\theta}_{L^2(\Hs)}^2\big) + C \norm{\phi}_{L^2(\Ws)}^2\\
           & \leq C (\tau^2 + 1) \norm{u_1 -u_2}_{L^2(\Hs)}^2 + C \norm{u_1 -u_2}_{L^2(\Hs)}^2
       \end{split}
    \end{equation*}
and conclude the proof.
\end{proof}
 
Note that ~\Cref{thm:wellposedness} allows us to define the so-called control-to-state or solution operator. More precisely, we introduce the state-space 
 \begin{equation*}
     \begin{split}
         \mathcal{X} \coloneqq &\left[H^1(0,T;\Hs) \cap C^0([0,T];\Vs)  \cap L^2(0,T;\Ws) \cap L^{\infty}(Q)\right]\\
         &\times \left[W^{1,\infty}(0,T;\Vs') \cap  H^1(0,T;\Vs) \cap  L^{\infty}(0,T;\Ws) \right] \\
         &\times \left[ L^{\infty}(0,T;\Vs) \cap  L^2(0,T;\Ws)\right] \times \left[ H^1(0,T;\Hs) \cap  C^0([0,T];\Vs) \cap  L^2(0,T;\Ws)\right],
     \end{split}
 \end{equation*}
and observe that the solution operator
 \begin{equation*}
     \S \colon \Um \to \mathcal{X}, \qquad u \mapsto (\theta,\phi\,\mu,\sigma),
 \end{equation*}
 which maps every control $u \in \Um$ to the corresponding strong solution of the state system $(\theta,\phi\,\mu,\sigma) \in \mathcal{X}$, is well-defined and bounded.
 Moreover, thanks to the continuous embedding 
 \begin{equation*}
     \begin{split}
          \mathcal{X} \subseteq \mathcal{Y} \coloneqq &\left[C^0([0,T]; \Hs) \cap L^2(0,T;\Vs)\right] \times \left[C^0([0,T]; \Hs) \cap L^2(0,T;\Ws)\right] \\
         &\times L^2(0,T;\Hs)\times \left[C^0([0,T]; \Hs) \cap L^2(0,T;\Vs)\right],
     \end{split}
 \end{equation*}
 and to the continuous dependence estimate \eqref{eq:cont_dep} from \Cref{thm:wellposedness}, $\S \colon \Um \to \mathcal{Y}$ is Lipschitz continuous.
With this new notation, we define the reduced cost functional as
\begin{equation*}
    \J(u) \coloneqq \JJ(\S_1(u), \S_2(u), u),
\end{equation*}
where $\S_1$ and $\S_2$ select, respectively, the first and the second component of the solution operator $\S$.
Then, the optimal control problem can be stated as
\begin{equation}\label{eq:optimal_control_problem}
        \min_{u\in \Uad} \J(u),
\end{equation}
which means that we search a minimiser for the functional $\JJ$ subject to the PDE system \eqref{eq:system}--\eqref{eq:initial_datum} and constrained to $\Uad$. 

\begin{thm}\label{thm:existence_opt_control}
    The optimal control problem \eqref{eq:optimal_control_problem} has at least one minimiser $u^* \in \Uad$. 
\end{thm}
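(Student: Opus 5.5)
We argue by the direct method of the calculus of variations. Since $\J \geq 0$ on $\Uad$, the infimum $\iota \coloneqq \inf_{u\in\Uad}\J(u)$ is finite. We pick a minimising sequence $\{u_n\}\subseteq\Uad$ with $\J(u_n)\to\iota$. Since $\Uad$ is bounded in $L^\infty(Q)$, we can extract a subsequence (not relabelled) with $u_n \to u^*$ weakly star in $L^\infty(Q)$, hence weakly in $L^2(Q)$. The set $\Uad$ is convex and closed in $L^2(Q)$, so it is weakly closed there, and therefore $u^*\in\Uad\subseteq\Um$.

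Next we pass to the limit in the states. Let $(\theta_n,\phi_n,\mu_n,\sigma_n)=\S(u_n)$. Since $\Uad\subseteq\Um$, the uniform bound \eqref{eq:solution_estimate} of \Cref{thm:wellposedness} holds with the constant $C_M$, independent of $n$. Weak and weak star compactness in the spaces appearing in \eqref{pcol1}--\eqref{pcol14} yields limits $(\theta,\phi,\mu,\sigma)$ along a further subsequence. The Aubin--Lions--Simon lemma then gives strong convergence:
\begin{itemize}
\item $\theta_n\to\theta$ and $\sigma_n\to\sigma$ in $C^0([0,T];\Hs)\cap L^2(0,T;\Vs)$, using $H^1(\Hs)\cap L^\infty(\Vs)$ compact in $C^0(\Hs)$ and $H^1(\Hs)\cap L^2(\Ws)$ compact in $L^2(\Vs)$;
\item $\phi_n\to\phi$ in $C^0([0,T];C^0(\overline\Omega))$, using $W^{1,\infty}(\Vs')\cap L^\infty(\Ws)$ and the compact embedding $\Ws\hookrightarrow C^0(\overline\Omega)$ for $d\le 3$.
\end{itemize}

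This strong convergence lets us pass to the limit in all nonlinear terms. Since $\phi_n$ converges uniformly and is uniformly bounded, $f(\phi_n)\to f(\phi)$ and $\h(\phi_n)\to\h(\phi)$ uniformly, by local Lipschitz continuity of $f$ and Lipschitz continuity of $\h$. The same argument gives $\k(\theta_n)\to\k(\theta)$ in $L^2(Q)$. The products $(\lambdap\sigma_n-\lambdaa-\lambdae\theta_n)\h(\phi_n)$ and $\sigma_n\h(\phi_n)$, $\sigma_n\k(\theta_n)$ are each a strongly convergent factor times a bounded, a.e. convergent one, so they converge in $L^1(Q)$ at least. The linear terms, including $\partial_t$, $\Delta$, $\tau\partial_t\phi$ and $u_n$, pass to the limit weakly. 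Hence $(\theta,\phi,\mu,\sigma)$ is a strong solution with control $u^*$, with the initial data preserved by the $C^0(\Hs)$ convergence. By the uniqueness in \Cref{thm:wellposedness}, $(\theta,\phi,\mu,\sigma)=\S(u^*)$.

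Alternatively, one could avoid the identification step through the Lipschitz estimate \eqref{eq:cont_dep}. That estimate only controls the states by the strong $L^2$ distance of the controls, however, and we only have weak convergence of $u_n$. So identifying the limit via uniqueness is the genuine step, and it is the main, if routine, obstacle.

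Finally, we prove lower semicontinuity of the cost. The terms with $b_1,b_3$ converge because $\theta_n\to\theta$ and $\phi_n\to\phi$ strongly in $L^2(Q)$. The terms with $b_2,b_4$ converge because $\theta_n(T)\to\theta(T)$ and $\phi_n(T)\to\phi(T)$ in $\Hs$ by the $C^0([0,T];\Hs)$ convergence. The term $\frac{b_5}{2}\norm{u}_{L^2(Q)}^2$ is weakly lower semicontinuous. Therefore
\[
\J(u^*)\le\liminf_{n\to\infty}\J(u_n)=\iota ,
\]
and since $u^*\in\Uad$ we also have $\J(u^*)\ge\iota$. Thus $u^*$ is a minimiser.
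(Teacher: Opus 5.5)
Your proof is correct and follows the same route as the paper. Both use the direct method with a weak-$*$ convergent minimising sequence, then Banach--Alaoglu and Aubin--Lions--Simon for the states, with uniform convergence of $\phi_n$ handling $f(\phi_n)$. The limit is identified with $\S(u^*)$ by passing to the limit in the system and invoking uniqueness, and the cost is handled by weak lower semicontinuity. Your argument that $u^*\in\Uad$, via convexity and closedness in $L^2(Q)$ and hence weak closedness there, is slightly more careful than the paper's. The paper claims weak-$*$ closedness directly from norm-closedness and convexity in $L^\infty(Q)$, which does not hold for general convex sets in a dual space.
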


\begin{proof}
    The proof is quite standard, and it relies on the direct method of the Calculus of Variations. First of all, we notice that, since $\J$ is proper and nonnegative, $\inf_{u \in \Uad}\J(u)$ is finite and nonnegative. Let $\{u_n\}_{n=1}^{+\infty} \subseteq \Uad$ be a minimizing sequence, i.e.,
    \begin{equation*}
        \inf_{u \in \Uad}\J(u) = \lim_{n \to +\infty} \J(u_n),
    \end{equation*}
    with corresponding states denoted as $(\theta_n, \phi_n, \mu_n, \sigma_n) \coloneqq \S(u_n)$. Since $\Uad$ is bounded in $L^{\infty}(Q)$, there exists a $u^* \in L^{\infty}(Q)$ such that
    \begin{alignat}{2}
        u_n \to u^* &\quad \text{weakly-}\ast &&\quad \text{in } L^{\infty}(Q)\label{eq:w_conv_u_optcontrol}
    \end{alignat}
    along a non-relabelled subsequence.
    Moreover, since $\Uad$ is convex and closed in $L^{\infty}(Q)$, it is weakly-$\ast$ closed. Thus, the limit $u^*$ belongs to the admissible set $\Uad$. Regarding the states, first we recall the estimate \eqref{eq:solution_estimate}, the compactness of the embeddings $W\subset V\subset H$ and $W\in C^0(\overline\Omega) $, the
    Banach--Alaouglu (see, e.g., \cite{brezis2011}) and Aubin--Lions theorems (see \cite[][Section 8, Corollary 4]{Simon_86}). Then, by these tools, it turns out that there exists a quadruplet $(\theta^*, \phi^*, \mu^*, \sigma^*) \in \mathcal{X}$ such that
    \begin{alignat}{2}
         \theta_n \to \theta^* &\quad \text{weakly-}\ast &&\quad \text{in } H^1(0,T;\Hs) \cap L^\infty(0,T;\Vs)  \cap L^2(0,T;\Ws) \cap L^{\infty}(Q),\label{eq:w_conv_theta_optcontrol}\\
         &\quad \text{strongly} &&\quad \text{in } C^0([0,T];\Hs) \cap L^2(0,T;\Vs),\label{eq:s_conv_theta_optcontrol}\\
         \phi_n \to \phi^* &\quad \text{weakly-}\ast &&\quad \text{in } W^{1,\infty}(0,T;\Vs') \cap  H^1(0,T;\Vs) \cap  L^{\infty}(0,T;\Ws),\label{eq:w_conv_phi_optcontrol}\\
         &\quad \text{strongly} &&\quad \text{in } C^0([0,T];\Vs)\cap C^0(\overline{Q}),\label{eq:s_conv_phi_optcontrol}\\
         \mu_n \to \mu^* &\quad \text{weakly-}\ast &&\quad \text{in } L^{\infty}(0,T;\Vs) \cap  L^2(0,T;\Ws),\label{eq:w_conv_mu_optcontrol}\\
         \sigma_n \to \sigma^* &\quad \text{weakly-}\ast &&\quad \text{in } H^1(0,T;\Hs) \cap  L^\infty(0,T;\Vs) \cap L^2(0,T;\Ws),\label{eq:w_conv_sigma_optcontrol}
         \\
         &\quad \text{strongly} &&\quad \text{in } C^0([0,T];\Hs) \cap L^2(0,T;\Vs),\label{eq:s_conv_sigma_optcontrol}
    \end{alignat}
    along a further subsequence that, again, we do not relabel. We claim that $(\theta^*, \phi^*, \mu^*, \sigma^*)$ coincides with $\S(u^*)$. To prove this, it is enough to write the system satisfied by each element of the sequence, and pass to the limit as $n \to +\infty$, exploiting 
    convergences \eqref{eq:w_conv_u_optcontrol}--\eqref{eq:s_conv_sigma_optcontrol}. In particular, the uniform convergence of $\phi_n$ implied by \eqref{eq:s_conv_phi_optcontrol} is crucial to pass to the limit in the nonlinearity $f(\phi_n)$. The passage to the limit in the other nonlinear terms can be easily handled, for instance $ \lambdad \sigma_n \k(\theta_n )$ converges to  $\lambdad \sigma \k(\theta)$ strongly in $C^0([0,T];L^1(\Omega))$  due to \eqref{eq:s_conv_sigma_optcontrol}, \eqref{eq:s_conv_theta_optcontrol} and the Lipschitz continuity of $\k$ ensured by \ref{hyp:nonlinearities}.
    Consequently, $(\theta^*, \phi^*, \mu^*, \sigma^*)$ is the strong solution of the system \eqref{eq:system}--\eqref{eq:initial_datum} associated with the admissible control $u^*$. Moreover, by the weak 
    lower semicontinuity of the $L^2$ norm and the weak convergence \eqref{eq:w_conv_u_optcontrol} combined with the continuity of the $L^2$ norm and the strong convergences \eqref{eq:s_conv_theta_optcontrol}, \eqref{eq:s_conv_phi_optcontrol}, we have
    \begin{equation*}
        \J(u^*) \leq \lim_{n \to +\infty} \J(u_n) = \inf_{u \in \Uad} \J(u).
    \end{equation*}
    Therefore, $u^*$ is a minimiser of $\J$ over $\Uad$, and the proof is complete.
\end{proof}

\noindent Our next goal is to characterise the optimal controls by means of first-order necessary optimality conditions. The standard approach consists in proving that the solution operator is Fr\'echet differentiable between suitable spaces, which in turn easily implies the Fréchet differentiability of the reduced cost functional $\J$.
As a first step in this direction, we introduce the so-called linearised system, which will play a crucial role in the explicit characterisation of the derivative of $\S$. Its formulation and analysis are the subject of the following section.

\section{The linearised system}
\label{sec:lin}

Let $u \in \Um$ be fixed, and let $(\theta, \phi, \mu, \sigma)$ denote the associated state. For a given (sufficiently small) perturbation $h \in L^{\infty}(Q)$, we consider the perturbed control $u + h$ and its corresponding state. Owing to the continuous dependence result stated in \Cref{thm:wellposedness}, this state remains close to $(\theta, \phi, \mu, \sigma)$.
To analyse the difference between the two state systems, we formally linearise the nonlinear terms by means of first-order Taylor expansions. In this way, we find that the perturbation variables $(\zeta, \xi, \eta, \rho)$---which we shall henceforth refer to as the linearised variables---satisfy the following linear system (written now with formal equations and conditions):
\begin{subequations}\label{eq:lin_system}
    \begin{align}
        & \partial_t(\zeta +\ell\xi) - \Delta \zeta = h,\label{eq:lin_temp}\\
        & \partial_t\xi - \Delta \eta = (\lambdap \rho - \lambdae \zeta)\h(\phi) + (\lambdap \sigma - \lambdaa - \lambdae \theta)\h'(\phi)\xi,\label{eq:lin_ch1}\\
        & \tau \partial_t \xi - \Delta \xi + f'(\phi)\xi - \chi \rho -\Lambda \zeta = \eta, \label{eq:lin_ch2}\\
        & \begin{aligned}
            \partial_t \rho - \Delta (\rho - \chi \xi) = &- \lambdac \rho \h(\phi) - \lambdac \sigma \h'(\phi)\xi\\
            &- \lambdab \rho  - \lambdad \rho \k(\theta) - \lambdad \sigma \k'(\theta)\zeta,
        \end{aligned} \label{eq:lin_nutrient}
    \end{align}
\end{subequations}
in $Q$, complemented with the homogeneous Neumann boundary conditions
\begin{equation}\label{eq:boundary}
    \partial_{\nnu} \zeta = \partial_{\nnu} \xi = \partial_{\nnu} \eta = \partial_{\nnu} \rho = 0
\end{equation}
on $\partial \Omega \times (0,T)$, and the initial conditions
\begin{equation}\label{eq:lin_init_cond}
    \zeta(0) = \xi(0) = \rho(0) = 0
\end{equation}
in $\Omega$. 
We emphasise that although the formal derivation above requires $h$ to be sufficiently small in $L^{\infty}(Q)$, the linear system \eqref{eq:lin_system} is well-defined under the milder assumption $h \in L^2(Q)$.

\begin{prop}\label{prop:existence_lin}
    For every control $u \in \Um$ with associated state $(\theta,\phi,\mu,\sigma)$ and every $h \in L^2(Q)$, there exists a unique solution $(\zeta,\xi,\eta,\rho)$ to the system \eqref{eq:lin_system}--\eqref{eq:lin_init_cond}, meaning that the quadruple belongs to the spaces
    \begin{gather*}
        \zeta \in H^1(0,T;\Vs') \cap C^{0}([0,T];\Hs) \cap L^2(0,T;\Vs),\\
        \xi \in H^1(0,T;\Vs') \cap L^{\infty}(0,T;\Vs)\cap L^2(0,T;\Ws),\qquad \tau^{1/2}\xi \in H^1(0,T;\Hs),\\
        \eta \in L^2(0,T; \Vs),\\
        \rho \in H^1(0,T;\Hs) \cap C^{0}([0,T];\Vs) \cap L^2(0,T;\Ws),
    \end{gather*}
     the equations \eqref{eq:lin_temp}--\eqref{eq:lin_nutrient} and the boundary conditions \eqref{eq:boundary} are satisfied in the following variational sense
    {\allowdisplaybreaks
    \begin{subequations}\label{eq:lin_problem_integrals}
        \begin{align}
            & \duality{\partial_t (\zeta + \ell \xi)}{v}_{\Vs} + \into \nabla \zeta \cdot \nabla v \dx = \into h \, v \dx, \label{eq:lin_temp_weak} \\
            & \begin{aligned}
                &\duality{\partial_t \xi}{v}_{\Vs} + \into \nabla \eta \cdot \nabla v \dx\\
                & \quad = \into \left[(\lambdap \rho - \lambdae \zeta) \h(\phi) + (\lambdap \sigma - \lambdaa - \lambdae \theta) \h'(\phi) \xi \right] v \dx,
            \end{aligned} \label{eq:lin_ch1_weak} \\
            & \tau \duality{\partial_t \xi}{v}_{\Vs} + \into \nabla \xi \cdot \nabla v \dx + \into \left(f'(\phi) \xi - \chi \rho - \Lambda \zeta \right) v \dx = \into \eta \, v \dx, \label{eq:lin_ch2_weak} \\
            & \begin{aligned}
                &\duality{\partial_t \rho}{v}_{\Vs} + \into \nabla (\rho - \chi \xi) \cdot \nabla v \dx\\
                &\quad = \into \Big[ - \lambdac \rho \, \h(\phi) - \lambdac \sigma \h'(\phi) \xi - \lambdab \rho - \lambdad \rho \, \k(\theta) - \lambdad \sigma \k'(\theta) \zeta \Big] v \dx, 
            \end{aligned}\label{eq:lin_nutrient_weak}
        \end{align}
    \end{subequations}}%
    for all test functions $v \in \Vs$ and a.e. $t \in (0,T)$, and  the initial conditions \eqref{eq:lin_init_cond} are satisfied a.e. in $\Omega$.
\end{prop}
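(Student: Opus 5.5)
Since the system \eqref{eq:lin_system} is linear with coefficients that are bounded thanks to \Cref{thm:wellposedness}, we will argue by a Faedo--Galerkin scheme, uniform a priori estimates, and passage to the limit. Uniqueness will follow by repeating the same estimates with $h=0$. Concretely, we take the eigenfunctions $\{w_j\}$ of the Neumann Laplacian (an orthonormal basis of $\Hs$, orthogonal in $\Vs$), project the four equations onto $\mathrm{span}\{w_1,\dots,w_n\}$, and obtain a linear ODE system. In the case $\tau=0$, equation \eqref{eq:lin_ch2} is algebraic in $\eta$, so we eliminate $\eta_n$ and obtain an ODE for $(\zeta_n,\xi_n,\rho_n)$. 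Its coefficients are in $L^2(0,T)$ or better, since $\h(\phi)$, $\h'(\phi)$, $f'(\phi)$, $\k(\theta)$, $\k'(\theta)$ and $\theta$ are in $L^\infty(Q)$ and $\sigma\in L^\infty(0,T;\Vs)\subset L^\infty(0,T;L^4(\Omega))$. Carathéodory theory therefore gives global discrete solutions.

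For the energy estimate we test:
\begin{itemize}
\item \eqref{eq:lin_temp} by $\zeta_n$;
\item \eqref{eq:lin_ch1} by $\eta_n$ and by a multiple of $\xi_n$;
\item \eqref{eq:lin_ch2} by $-\partial_t\xi_n$ (or, more conveniently, by $\partial_t\xi_n$, using \eqref{eq:lin_ch1} tested by $\eta_n$ to cancel $\int\partial_t\xi_n\eta_n$);
\item \eqref{eq:lin_nutrient} by $\rho_n$ and by $-\Delta\rho_n$.
\end{itemize}
The key point is the coupling between \eqref{eq:lin_temp} and \eqref{eq:lin_ch1}. Testing \eqref{eq:lin_temp} by $\zeta_n$ produces $\ell\int\partial_t\xi_n\zeta_n$. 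We replace $\partial_t\xi_n$ there using \eqref{eq:lin_ch1}, which gives $\ell\int\nabla\eta_n\cdot\nabla\zeta_n$ plus lower-order terms. This term can be absorbed by $\frac12\|\nabla\eta_n\|^2$, obtained from testing \eqref{eq:lin_ch1} by $\eta_n$, together with $\|\nabla\zeta_n\|^2$ after a suitable weighting. Alternatively, we test \eqref{eq:lin_temp} by $\Lambda/\ell\,\zeta_n$, so that the $\Lambda\zeta_n$ term in \eqref{eq:lin_ch2} (tested by $\partial_t\xi_n$) cancels the $\Lambda\int\partial_t\xi_n\zeta_n$ term exactly; this is the Caginalp-type cancellation.

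With this cancellation, the sum of the tested equations gives control of
\[
\|\zeta_n\|_{L^\infty(\Hs)\cap L^2(\Vs)}+\|\nabla\xi_n\|_{L^\infty(\Hs)}+\tau^{1/2}\|\partial_t\xi_n\|_{L^2(\Hs)}+\|\nabla\eta_n\|_{L^2(\Hs)}+\|\rho_n\|_{L^\infty(\Vs)\cap L^2(\Ws)}.
\]
The remaining terms are handled as follows:
\begin{itemize}
\item $\chi\int\rho_n\partial_t\xi_n$ is rewritten through \eqref{eq:lin_ch1} as $\chi\int\nabla\eta_n\cdot\nabla\rho_n$ plus lower-order terms, and absorbed.
\item $\int f'(\phi)\xi_n\partial_t\xi_n$ is treated in the same way, using $\nabla(f'(\phi)\xi_n)$, with $\nabla\phi\in L^\infty(L^6)$ and $\xi_n\in L^3$.
\item The chemotactic term $\chi\int\Delta\xi_n\Delta\rho_n$ needs $\xi_n\in L^2(\Ws)$. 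This follows from elliptic regularity in \eqref{eq:lin_ch2}, $-\Delta\xi_n=\eta_n-\tau\partial_t\xi_n-f'(\phi)\xi_n+\dots$, combined with the $\Hs$-norm of $\eta_n$. That norm in turn comes from its mean value, computed by testing \eqref{eq:lin_ch2} with $1$, together with Poincaré's inequality \eqref{pier3}.
\item Terms such as $\sigma\h'(\phi)\xi_n\rho_n$ or $\sigma\k'(\theta)\zeta_n$ are estimated with Hölder as $L^4\cdot L^4\cdot\Hs$, followed by \eqref{pier5} and Young.
\end{itemize}
Gronwall's lemma then closes the estimate. By comparison in the equations we also obtain bounds for $\partial_t\zeta_n,\partial_t\xi_n$ in $L^2(\Vs')$ and $\partial_t\rho_n$ in $L^2(\Hs)$.

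The main obstacle is closing this estimate uniformly in $\tau\ge0$. In that regime $\partial_t\xi$ is only in $L^2(\Vs')$, so every term involving $\partial_t\xi_n$ must be routed through \eqref{eq:lin_ch1} into $\nabla\eta_n$. Moreover, the nutrient term $\chi\Delta\xi$ forces the $\Ws$-bound on $\xi$ to be obtained simultaneously with the $\Ws$-bound on $\rho$, which is an interlocking absorption. If the direct $\partial_t\xi_n$ test becomes problematic, the fallback is to test \eqref{eq:lin_ch1} by $\xi_n$ and by $\eta_n$, test \eqref{eq:lin_ch2} by $-\Delta\xi_n$, and handle the cross terms with the small-constant inequalities \eqref{pier5}--\eqref{pier6}.

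Finally, weak and weak-$*$ compactness together with Aubin--Lions give the limit. Linearity makes passing to the limit immediate and yields \eqref{eq:lin_problem_integrals}. Continuity in time follows from the standard embeddings $H^1(\Vs')\cap L^2(\Vs)\subset C^0(\Hs)$ and $H^1(\Hs)\cap L^2(\Ws)\subset C^0(\Vs)$, and the initial conditions pass to the limit. Uniqueness is obtained by applying the same estimate to the difference of two solutions, whose right-hand side vanishes, and invoking Gronwall.
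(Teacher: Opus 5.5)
Your argument is correct, but it is organised differently from the paper's.

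\textbf{The paper's route.} The paper works in three steps.
\begin{enumerate}
\item A low-order estimate: the temperature equation is integrated in time and tested by $R\zeta$, \eqref{eq:lin_ch1_weak} by $\xi$, \eqref{eq:lin_ch2_weak} by $-\Delta\xi$, and \eqref{eq:lin_nutrient_weak} by $\rho$. Here the $\eta$-terms cancel, so $\xi\in L^\infty(0,T;\Hs)\cap L^2(0,T;\Ws)$ is obtained with no information on $\eta$.
\item Bounds on $\overline{\partial_t\xi}$ and $\overline{\eta}$.
\item The Caginalp-type estimate you describe, with three differences:
\begin{itemize}
\item the nutrient equation is tested by $\partial_t\rho$;
\item $\int f'(\phi)\xi\,\partial_t\xi$ and $\chi\int\rho\,\partial_t\xi$ are integrated by parts in time, using $\partial_t\phi\in L^2(0,T;\Hs)$ and the bounds of step one;
\item $\chi\int(-\Delta\xi)\partial_t\rho$ is controlled by the already known $L^2(0,T;\Ws)$-norm of $\xi$.
\end{itemize}
\end{enumerate}

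\textbf{Your route.} You close everything in a single Gronwall argument.
\begin{itemize}
\item The $\partial_t\xi$-terms are converted through \eqref{eq:lin_ch1_weak} into $\nabla\eta$-terms. This is admissible at the Galerkin level because the $\Hs$-projection $P_n$ onto the Neumann eigenfunctions is $\Vs$-stable, so you may test by $P_n(f'(\phi)\xi_n)$.
\item The nutrient equation is handled by the parabolic tests with $\rho$ and $-\Delta\rho$.
\item $\norm{\Delta\xi}_{\Hs}$ is recovered from \eqref{eq:lin_ch2} as $\lesssim\norm{\nabla\eta}_{\Hs}+\tau\norm{\partial_t\xi}_{\Hs}+{}$lower-order terms.
\end{itemize}

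\textbf{The interlocking you flag.} It is real but harmless, and you should resolve it explicitly rather than leave it as an ``obstacle''. Weight the $-\Delta\rho_n$ test by a small $\delta>0$, depending on $\chi$, $\tau$ and the elliptic constant. Then $\delta\chi^2\norm{\Delta\xi}_{\Hs}^2$ is absorbed by $\norm{\nabla\eta}_{\Hs}^2$ and $\tau\norm{\partial_t\xi}_{\Hs}^2$. The $\norm{\nabla\rho}_{\Hs}^2$ terms created elsewhere go into Gronwall through the energy term $\frac{\delta}{2}\norm{\nabla\rho}_{\Hs}^2$.

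\textbf{Uniqueness.} The paper's first estimate uses only test functions admissible for every solution in the stated class, so it gives uniqueness directly. Your uniqueness argument tests \eqref{eq:lin_ch2_weak} by $\partial_t\xi\in L^2(0,T;\Vs')$ when $\tau=0$, which needs justification. Either observe that then $-\Delta\xi=\eta-f'(\phi)\xi+\chi\rho+\Lambda\zeta\in L^2(0,T;\Vs)$, so that $\langle\partial_t\xi,-\Delta\xi\rangle_{\Vs}=\frac12\frac{d}{dt}\norm{\nabla\xi}_{\Hs}^2$ makes sense. Or simply use your fallback, which is exactly the paper's first estimate.

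\textbf{Trade-off.} The paper's hierarchy needs no such tuning of weights. Your route dispenses with the time integration by parts and with $\partial_t\phi$, using $\nabla\phi\in L^\infty(0,T;L^6(\Omega))$ instead.
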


\begin{proof}
    The existence of solutions can be established by means of a Faedo--Galerkin space discretisation with a special basis. Since this procedure is standard, we omit the technical details. In what follows, we focus on (formally) deriving the a priori estimates at the continuous level; performing these estimates at the discrete level is permitted and gives enough compactness to pass to the limit, finding a solution of the system \eqref{eq:lin_system}--\eqref{eq:lin_init_cond}. From these very same estimates (more precisely, see the subsequent \eqref{eq:lin_est1}, \eqref{eq:lin_est3}) and the linearity of the system, uniqueness follows.\\
    
    \noindent \textbf{First estimate.} We integrate the equation \eqref{eq:lin_temp_weak} over the time interval $(0,t)$ for a $t \in (0,T)$, we multiply it by a (big) constant $R > 0$ fixed but yet to be chosen, and we take as a test function $v = \zeta$. We sum the resulting inequality with \eqref{eq:lin_ch1_weak} tested with $v = \xi$, \eqref{eq:lin_ch2_weak} tested with $v = (- \Delta \xi)$, and \eqref{eq:lin_nutrient_weak} tested with $v = \rho$. Some terms cancel out, and we obtain:
    \begin{align}\label{eq:lin_est1_1}
        &R \into |\zeta|^2 \dx + \frac{R}{2} \difft \into |\nabla (1 \ast_t \zeta)|^2 \dx + \frac{1}{2} \difft \into |\xi|^2 \dx + \frac{\tau}{2} \difft \into |\nabla \xi|^2 \dx \notag\\
        &\quad \quad + \into |-\Delta \xi|^2 \dx + \frac{1}{2} \difft \into |\rho|^2 \dx + \into |\nabla \rho|^2 \dx \notag\\
        &\quad = R \into (1 \ast_t h) \zeta \dx - R\ell \into \xi \zeta \dx \notag\\
        &\quad \quad + \into (\lambdap \rho - \lambdae \zeta) \h(\phi) \xi \dx + \into (\lambdap \sigma - \lambdaa - \lambdae \theta) \h'(\phi) |\xi|^2 \dx\\
        &\quad \quad - \into f'(\phi)\xi (-\Delta \xi) \dx + \chi \into \rho (-\Delta \xi) \dx + \Lambda \into \zeta (-\Delta \xi) \dx \notag\\
        &\quad \quad  + \chi \into (-\Delta \xi) \rho \dx -\lambdac \into \sigma \h'(\phi) \xi \rho \dx - \lambdad \into \sigma \k'(\theta)\zeta \rho \notag\\
        &\quad \quad - \into \left(\lambdac \h(\phi) + \lambdab + \lambdad \k(\theta)\right)|\rho|^2 \dx. \notag
    \end{align}
    We aim to estimate the right-hand side of \eqref{eq:lin_est1_1} from above. First of all, we notice that the last addend is nonpositive, because by hypotheses \ref{hyp:constants} and \ref{hyp:nonlinearities} the constants $\lambdac$, $\lambdab$, $\lambdad$ and the assigned functions $\h$, $\k$ are nonnegative. Then, we apply the H\"older and the Young inequalities, sometimes with a (small) constant $\epsilon > 0$ yet to be defined. We also make use of the continuous embedding $\Ws \subseteq L^{\infty}(\Omega)$, which holds in dimension $d=2,3$. 
   Moreover, we take advantage of the estimate in \eqref{eq:solution_estimate}  and, in particular, of the $L^\infty(Q)$-bound for $\theta$.
    Rearranging some terms, we have:
    \begin{equation*}
        \begin{split}
            & \frac{1}{2} \difft \left[ R \into |\nabla (1 \ast_t \zeta)|^2 \dx + \into |\xi|^2 \dx + \tau \into |\nabla \xi|^2 \dx + \into |\rho|^2 \dx \right] \\
            &\quad \quad + R \into |\zeta|^2 \dx + \into |-\Delta \xi|^2 \dx + \into |\nabla \rho|^2 \dx\\
            &\quad \leq \left(\frac{R}{2} + C_{\epsilon}\right) \into |\zeta|^2 \dx + \epsilon \into |-\Delta \xi|^2 \dx\\
            &\quad \quad + C_{\epsilon,R} \bigg[\into |1 \ast_t h|^2 \dx 
            + (1 + \norm{\sigma}_{\Ws}^2)\into |\rho|^2 \dx + (1 + \norm{\sigma}_{\Ws}^2)\into |\xi|^2 \dx \bigg]. 
        \end{split}
    \end{equation*}
    Next, we chose $\epsilon$ small enough (e.g., $\epsilon = 1/2$) and $R$ big enough (so that the now fixed $C_{\epsilon}$ is strictly smaller that $R/2$). This way, the first two addends on the right-hand side of the inequality can be absorbed into the corresponding terms on the left-hand side. Renaming the constants, integrating in time over the time interval $(0,t)$, and exploiting the vanishing initial conditions \eqref{eq:lin_init_cond} leads to
    \begin{equation*}
        \begin{split}
             &   \into |\nabla (1 \ast_t \zeta)|^2 \dx + \into |\xi|^2 \dx + \tau \into |\nabla \xi|^2 \dx + \into |\rho|^2 \dx \\
            &\quad \quad + R \intto |\zeta|^2 \dx \ds + \intto |-\Delta \xi|^2 \dx \ds + \intto |\nabla \rho|^2 \dx \ds \\
            &\quad \leq C \int_0^t \bigg(\into |1 \ast_t h|^2 \dx 
            + (1 + \norm{\sigma}_{\Ws}^2)\into |\rho|^2 \dx + (1 + \norm{\sigma}_{\Ws}^2)\into |\xi|^2 \dx \bigg) \ds. 
        \end{split}
    \end{equation*}
    We apply the Gronwall inequality, obtaining:
    \begin{equation}\label{eq:lin_est1}
        \begin{split}
            &\norm{\zeta}_{L^2(\Hs)} + \norm{1 \ast_t \zeta}_{H^1(\Hs) \cap L^{\infty}(\Vs)} + \norm{\xi}_{L^{\infty}(\Hs) \cap L^2(\Ws)} + \norm{\tau^{1/2} \xi}_{L^{\infty}(\Vs)}\\
            & \quad+ \norm{\rho}_{L^{\infty}(\Hs) \cap L^2(\Vs)} \leq C \norm{1 \ast_t h}_{L^2(\Hs)} \leq C \norm{h}_{L^2(\Hs)},
        \end{split}
    \end{equation}
    where we have also exploited the uniform bound of $\sigma$ in the norm $L^2(0,T;\Ws)$ given by the inequality \eqref{eq:solution_estimate} in \Cref{thm:wellposedness}.\\

    \noindent \textbf{Second estimates.} We test \eqref{eq:lin_ch1_weak} with $v = 1$ and take the absolute value of both sides of the equality and use the bound on $\theta$ in $L^\infty(Q)$, obtaining 
    \begin{equation*}
        \begin{split}
            |\overline{\partial_t \xi}| & =\left| \into \left[(\lambdap \rho - \lambdae \zeta) \h(\phi) + (\lambdap \sigma - \lambdaa - \lambdae \theta) \h'(\phi) \xi \right] \dx \right|\\
            & \leq  C \into \left[|\rho| + |\zeta| + (1 + |\sigma|+|\theta|)|\xi|\right] \dx\\
            & \leq C \left[\norm{\rho}_{\Hs} + \norm{\zeta}_{\Hs} + (1 + \norm{\sigma}_{\Hs})\norm{\xi}_{\Hs} \right].
        \end{split}
    \end{equation*}
    Taking the $L^2(0,T)$-norm of both sides of the inequality, exploiting the estimates \eqref{eq:solution_estimate} and \eqref{eq:lin_est1}, we have
    \begin{equation}\label{eq:lin_est2_1}
        \norm{\overline{\partial_t \xi}}_{L^2(0,T)} \leq C \norm{h}_{L^2(\Hs)}.
    \end{equation}
    Similarly, we test the equation \eqref{eq:lin_ch2_weak} with $v = 1$, employ the estimates \eqref{eq:solution_estimate}, \eqref{eq:lin_est1}, \eqref{eq:lin_est2_1}, and derive
    \begin{equation}\label{eq:lin_est2_2}
        \norm{\overline{\eta}}_{L^2(0,T)} \leq C \norm{h}_{L^2(\Hs)}.
    \end{equation}

    \bigskip

    \noindent \textbf{Third estimate.} We sum the equation \eqref{eq:lin_temp_weak} tested with $v = \Lambda \zeta/\ell$, \eqref{eq:lin_ch1_weak} tested with $v = \eta$, \eqref{eq:lin_ch2_weak} tested with $v = \partial_t \xi$, and \eqref{eq:lin_nutrient_weak} tested with $v = \partial_t \rho$. After some cancellations, we find out that
\begin{equation}
    \begin{split}
        &\frac{1}{2} \difft \biggl(\frac{\Lambda}{\ell}\into |\zeta|^2 \dx \biggr) + \frac{\Lambda}{\ell}\into |\nabla \zeta|^2 \dx + \into |\nabla \eta|^2 \dx + \tau \into |\partial_t \xi|^2 \dx  \dx\\
        & \quad \quad + \frac{1}{2} \difft \into |\nabla \xi|^2
        + \into |\partial_t \rho|^2 \dx + \frac{1}{2} \difft \into |\nabla \rho|^2 \dx + \frac{\lambdab}{2} \difft \into |\rho|^2 \dx\\
        &\quad = \frac{\Lambda}{\ell}\into h \zeta \dx + \into (\lambdap\rho - \lambdae\zeta)\h(\phi) \eta \dx + \into (\lambdap\sigma - \lambdaa - \lambdae\theta)\h'(\phi) \xi \eta \dx \\
        &\quad \quad - \into f'(\phi) \xi \partial_t \xi \dx + \chi \into \rho \partial_t \xi \dx - \into \lambdac \h(\phi) \rho \partial_t \rho \dx\\
        &\quad \quad - \into \lambdac \sigma \h'(\phi)\xi\partial_t\rho \dx - \into \lambdad \k(\theta) \rho \partial_t \rho \dx - \into \lambdad \k'(\theta)\sigma\zeta \partial_t\rho \dx\\
        &\quad \quad + \chi \into (-\Delta \xi) \partial_t \rho \dx.
    \end{split}
\end{equation}
We integrate in time over $(0,t)$, exploit the initial conditions \eqref{eq:lin_init_cond}, and estimate the right-hand side. In particular, we exploit the fact that $\h$, $\k$ are Lipschitz continuous given by hypothesis \ref{hyp:given_function}, $f'$ being continuous by hypothesis \ref{hyp:beta_pi}, and $\theta$, $\phi$ being uniformly bounded in $L^{\infty}(Q)$ by the estimate \eqref{eq:solution_estimate}. By the H\"older and the Young inequalities, rearranging the terms on the left-hand side, we have
{\allowdisplaybreaks
\begin{equation}\label{eq:lin_est3_0}
    \begin{split}
        &\frac{1}{2} \left[\frac{\Lambda}{\ell}\into |\zeta|^2 \dx + \into |\nabla \xi|^2 \dx + \into |\nabla \rho|^2 \dx  + \lambdab \into |\rho|^2 \dx \right]  \\
        & \quad \quad{} +  \frac{\Lambda}{\ell}\intto |\nabla \zeta|^2 \dx \ds + \intto |\nabla \eta|^2 \dx \ds
        \\
        & \quad \quad{} 
        + \tau \intto |\partial_t \xi|^2 \dx \ds + \intto |\partial_t \rho|^2 \dx \ds\\
        &\quad \leq \intto |h|^2 \dx \ds + C \intto |\zeta|^2 \dx \ds + I_1 + I_2 + I_3 + I_4\\
        &\quad \quad + \epsilon \intto |\partial_t \rho|^2 \dx \ds + C_{\epsilon} \bigg[ \intto |\rho|^2 \dx \ds+ \intt \norm{\sigma}_{\Ws}^2 \norm{\xi}_{\Hs}^2\ds\\
        &\quad \quad + \intt \norm{\sigma}_{\Ws}^2 \norm{\zeta}_{\Hs}^2\ds + \intto |-\Delta \xi|^2 \dx \ds \bigg]\\
        &\quad \leq \epsilon \intto |\partial_t \rho|^2 \dx \ds + I_1 + I_2 + I_3 + I_4\\
        & \quad \quad + C_{\epsilon} \left[ \norm{h}_{L^2(\Hs)}^2 + \intt \norm{\sigma}_{\Ws}^2  \norm{\zeta}_{\Hs}^2\ds \right],
    \end{split}
\end{equation}%
}%
for a small constant $\epsilon>0$ fixed but yet to be determined, where we used the uniform estimates \eqref{eq:solution_estimate} and \eqref{eq:lin_est1}. In the following, we analyse the terms $I_1, \dots, I_4$ singularly. 
Regarding $I_1$, we apply the Young inequality, where $\epsilon$ is as before, and the Poincar\'e--Wirtinger inequality to the $\eta$ term, subtracting $\overline{\eta}$. Note that $C_P$ is a fixed constant
related to this inequality. Finally, we employ the H\"older inequality and exploit~\eqref{eq:lin_est1} and \eqref{eq:lin_est2_2}, obtaining: 
\begin{equation}\label{eq:lin_est3_1}
    \begin{split}
        I_1 & = \intto (\lambdap\rho - \lambdae\zeta) \h(\phi) \eta \dx \ds\\
        &\leq \frac{\epsilon}{C_P} \intto |\eta|^2 \dx \ds + C_{\epsilon} \intto \left(|\rho|^2 + |\zeta|^2\right) \dx \ds\\
        &\leq \epsilon \intto |\nabla \eta|^2 \dx \ds + C_{\epsilon} \intto \left(|\rho|^2 + |\zeta|^2 \right) \dx \ds + C\intt |\overline{\eta}|^2 \ds \\
        &\leq \epsilon \intto |\nabla \eta|^2 \dx \ds + C_{\epsilon} \norm{h}_{L^2(\Hs)}^2.
    \end{split}
\end{equation}
We turn our attention to $I_2$ and proceed similarly:
\begin{equation}\label{eq:lin_est3_2}
    \begin{split}
        I_2 &= \intto (\lambdap\sigma - \lambdaa - \lambdae\theta) \h'(\phi) \xi \eta \dx \ds \leq C \intto ( |\sigma| + 1 )|\xi| |\eta| \dx \ds \\
        &\leq \epsilon \intto |\eta|^2 \dx \ds + C_{\epsilon} \intt (\norm{\sigma}_{\Ws}^2 + 1) \norm{\xi}_{\Hs}^2 \ds\\
        & \leq \epsilon \intto |\nabla \eta|^2 \dx \ds + C \intt |\overline{\eta}|^2 \ds + C_{\epsilon}\norm{\xi}_{L^\infty(\Hs)}^2\\
        &\leq \epsilon \intto |\nabla \eta|^2 \dx \ds + C_{\epsilon} \norm{h}_{L^2(\Hs)}^2,
    \end{split}
\end{equation}
having used the H\"older and the Poincaré inequalities, and the uniform estimates \eqref{eq:solution_estimate}, \eqref{eq:lin_est1}, \eqref{eq:lin_est2_2}. The term $I_3$ is first of all rewritten equivalently, and then integrated by parts, recalling the initial condition $\xi(0) = 0$ and \eqref{eq:solution_estimate} together with \eqref{hyp:hatf_reg} and \eqref{eq:lin_est1}:
\begin{equation}\label{eq:lin_est3_3}
    \begin{split}
        I_3 &= - \intto f'(\phi) \xi \partial_t \xi \dx \ds = - \frac{1}{2} \intto f'(\phi) \partial_t (|\xi|^2) \dx \ds\\
        &{}= - \frac{1}{2} \into \bigl(f'(\phi) |\xi|^2\bigr)\dx+ \frac{1}{2} \intto f''(\phi) \partial_t \phi |\xi|^2 \dx \ds\\
        &\leq C \left( \into |\xi|^2 \dx + \norm{\xi}_{L^{\infty}(\Hs)} \intt \norm{\partial_t \phi}_{\Hs} \norm{\xi}_{\Ws} \ds \right)\\
        & \leq C \left(\norm{\xi}_{L^{\infty}(\Hs)}^2 + \norm{\xi}_{L^{\infty}(\Hs)} \norm{\partial_t \phi}_{L^2(\Hs)} \norm{\xi}_{L^2(\Ws)} \right) \leq C  \norm{h}_{L^2(\Hs)}^2 .
    \end{split}
\end{equation}
The term $I_4$ is again dealt with by integrating by parts, exploiting the vanishing initial conditions:
\begin{equation}\label{eq:lin_est3_4}
    \begin{split}
         I_4 &= \chi \intto \rho \partial_t \xi \dx \ds = \chi \into (\rho \xi)(t)\dx - \chi\intto \partial_t \rho \xi \dx \ds\\
        &\leq \chi \norm{\rho}_{L^{\infty}(\Hs)} \norm{\xi}_{L^{\infty}(\Hs)} + \epsilon \intto |\partial_t \rho|^2 \dx \ds + C_\epsilon \intto \xi^2 \dx \ds\\
        & \leq \epsilon \intto |\partial_t \rho|^2 \dx \ds + C_{\epsilon} \norm{h}_{L^2(\Hs)}^2,
    \end{split}
\end{equation}
by the H\"older inequality and the estimate \eqref{eq:lin_est1}. Finally, we collect the inequalities \eqref{eq:lin_est3_0}--\eqref{eq:lin_est3_4}, choosing $\epsilon$ small enough so that the related terms on the right-hand side can be absorbed in the corresponding addends on the left-hand side. This leads to the inequality
\begin{equation*}
    \begin{split}
         &\frac{1}{2} \left[ \frac{\Lambda}{\ell}\into |\zeta|^2 \dx + \into |\nabla \xi|^2 \dx + \into |\nabla \rho|^2 \dx  + \lambdab \into |\rho|^2 \dx \right] +  \frac{\Lambda}{\ell}\intto |\nabla \zeta|^2 \dx \ds \\
        & \quad \quad + \frac12\intto |\nabla \eta|^2 \dx \ds+ \tau \intto |\partial_t \xi|^2 \dx \ds + \frac12\intto |\partial_t \rho|^2 \dx \ds\\
        &\quad \leq C \left(\norm{h}_{L^2(\Hs)}^2 + \intt \norm{\sigma}_{\Ws}^2 \norm{\zeta}_{\Hs}^2\ds \right).
    \end{split}
\end{equation*}
Thus, as the function $s\mapsto \norm{\sigma(s)}_{\Ws}^2$ is in $L^1(0,T) $ by \eqref{eq:solution_estimate}, we can apply the Gronwall inequality and obtain:
\begin{equation}\label{eq:lin_est3}
    \begin{split}
            &\norm{\zeta}_{L^\infty(H) \cap L^2(V)} + \norm{\xi}_{L^\infty(V)} + \norm{\tau^{1/2} \partial_t \xi}_{L^2(H)} \\
            &\quad+ \norm{\eta}_{L^2(V)} + \norm{\rho}_{H^1(H)\cap L^\infty(V)} \leq C \norm{h}_{L^2(H)}.
    \end{split}
\end{equation}
Moreover, by a comparison argument applied first to \eqref{eq:lin_ch1_weak} 
and then to \eqref{eq:lin_temp_weak}, we have:
\begin{equation}\label{eq:lin_est3_add}
    \norm{\partial_t \xi}_{L^2(V')} \leq C \norm{h}_{L^2(H)}, \quad \norm{\partial_t \zeta}_{L^2(V')}  \leq C \norm{h}_{L^2(H)}.
\end{equation}
\end{proof}

\section{The Fréchet differentiability of the solution operator}

\label{sec:diff}

In this section, we show that the solution operator $\S : \Um \to \mathcal{Y}$ is not only Lipschitz continuous, but in fact Fréchet differentiable.

\begin{thm}\label{thm:diff_solution_op}
The solution operator $\S : \Um \to \mathcal{Y}$ is Fréchet differentiable. Moreover, for any $u \in \Um$, the Fréchet derivative of $\S$ at $u$ is characterised by
\begin{equation*}
        D\S(u)[h] = (\zetah,\xih,\etah,\rhoh)
    \end{equation*}
for all $h \in \U$, where $(\zetah,\xih,\etah,\rhoh)$ denotes the solution of the linearised system \eqref{eq:lin_system}--\eqref{eq:lin_init_cond}.
\end{thm}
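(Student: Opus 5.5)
Fix $u \in \Um$ and $h \in \U$ with $\norm{h}_{L^\infty(Q)}$ small enough that $u+h \in \Um$ (possible since $\Um$ is open). Write $(\theta^h,\phi^h,\mu^h,\sigma^h) \coloneqq \S(u+h)$, $(\theta,\phi,\mu,\sigma) \coloneqq \S(u)$, and let $(\zetah,\xih,\etah,\rhoh)$ be the solution of the linearised system from \Cref{prop:existence_lin}. The map $h \mapsto (\zetah,\xih,\etah,\rhoh)$ is linear, and by \eqref{eq:lin_est1} and \eqref{eq:lin_est3} it is bounded from $L^2(Q)$, hence from $\U$, into $\mathcal{Y}$; continuity in time with values in $\Hs$ follows from the $H^1(0,T;\Vs')\cap L^2(0,T;\Vs)$ regularity. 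It therefore suffices to show that the remainders
\[
\psi \coloneqq \theta^h - \theta - \zetah,\quad \omega \coloneqq \phi^h - \phi - \xih,\quad \nu \coloneqq \mu^h - \mu - \etah,\quad \varsigma \coloneqq \sigma^h - \sigma - \rhoh
\]
satisfy $\norm{(\psi,\omega,\nu,\varsigma)}_{\mathcal{Y}} \le C\norm{h}_{L^2(Q)}^2$. This is more than the required $o(\norm{h}_{\U})$.

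\textbf{Step 1: the remainder system.} Subtracting the state equations for $u$ and the linearised equations from those for $u+h$ gives a linear system for $(\psi,\omega,\nu,\varsigma)$ with the same principal part as \eqref{eq:lin_system}, homogeneous Neumann conditions, zero initial data, and the control $h$ removed. The nonlinear contributions become the same linear operators applied to the remainders, plus quadratic remainder terms. Taylor's formula with integral remainder gives, for instance,
\[
f(\phi^h)-f(\phi)-f'(\phi)\xih = f'(\phi)\omega + R_f,\qquad |R_f|\le C|\phi^h-\phi|^2,
\]
where $C$ is uniform because $\phi,\phi^h$ are bounded in $L^\infty(Q)$ by \eqref{eq:solution_estimate} and $f\in C^2$. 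The same applies to $\h(\phi)$ and $\k(\theta)$ using $\h,\k\in C^2$ and the $L^\infty$ bounds on $\theta,\theta^h$. Products produce further quadratic terms: $(\sigma^h\h(\phi^h)-\sigma\h(\phi))$ gives $(\sigma^h-\sigma)(\h(\phi^h)-\h(\phi))$ and $\sigma\,\h''\,|\phi^h-\phi|^2$; the $\k$ coupling gives $(\sigma^h-\sigma)(\k(\theta^h)-\k(\theta))$ and $\sigma|\theta^h-\theta|^2$; and $\theta$ appears multiplied by $\h(\phi)$.

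\textbf{Step 2: estimates.} I will redo the first estimate of \Cref{prop:existence_lin} on the remainder system. That means testing the time-integrated temperature equation by $R\psi$, the first Cahn--Hilliard equation by $\omega$, the second by $-\Delta\omega$, and the nutrient equation by $\varsigma$. The linear terms are handled exactly as before and absorbed via Gronwall. This should yield a bound on $\norm{\psi}_{L^2(\Hs)}+\norm{\omega}_{L^\infty(\Hs)\cap L^2(\Ws)}+\norm{\varsigma}_{L^\infty(\Hs)\cap L^2(\Vs)}$ by the $L^2(0,T;\Hs)$ (or $L^1(0,T;\Hs)$) norm of the quadratic forcing terms. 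Then I will repeat the third estimate to control $\norm{\psi}_{L^\infty(\Hs)\cap L^2(\Vs)}$ and $\norm{\nu}_{L^2(\Hs)}$; the latter can also be obtained by comparison in the second Cahn--Hilliard equation, as in the proof of \eqref{eq:cont_dep}. The forcing terms are controlled with \eqref{eq:cont_dep} together with Sobolev embeddings, for example
\[
\norm{|\phi^h-\phi|^2}_{L^2(\Hs)} \le \norm{\phi^h-\phi}_{L^\infty(\Hs)}\norm{\phi^h-\phi}_{L^2(L^\infty)} \le C\norm{\phi^h-\phi}_{L^\infty(\Hs)}\norm{\phi^h-\phi}_{L^2(\Ws)} \le C\norm{h}_{L^2(Q)}^2 .
\]

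\textbf{Main obstacle.} The delicate terms are those containing $\theta$-differences, since \eqref{eq:cont_dep} only controls $\theta^h-\theta$ in $L^\infty(\Hs)\cap L^2(\Vs)$, without $L^\infty$ control. A typical example is $\sigma|\theta^h-\theta|^2$ in the nutrient equation. I plan to estimate its pairing with $\varsigma$ by Hölder with exponents $(3,4,4)$ on $\sigma,(\theta^h-\theta)^2,\varsigma$, or more simply by
\[
\norm{\sigma}_{L^\infty}\norm{\theta^h-\theta}_{L^4}^2\norm{\varsigma}_{\Hs},
\]
using $\sigma\in L^2(0,T;\Ws)\subset L^2(0,T;L^\infty)$ and $\theta^h-\theta$ bounded in $L^4(0,T;L^3)$ by interpolation between $L^\infty(\Hs)$ and $L^2(\Vs)$. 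The factor $\norm{\sigma(s)}_{\Ws}^2$ then enters Gronwall as an $L^1(0,T)$ weight, exactly as in \eqref{eq:lin_est1}. The cross terms such as $(\sigma^h-\sigma)(\h(\phi^h)-\h(\phi))$ are easier, since $\phi^h-\phi\in L^2(L^\infty)$. If the Hölder bookkeeping fails to close in the plain $L^2(\Hs)$ norm, I would fall back on estimating the forcing terms in $L^1(0,T;\Hs)$ and testing against $L^\infty(0,T;\Hs)$ quantities, which the first estimate controls.
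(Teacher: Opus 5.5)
Most of your plan matches the paper's proof. The remainders and the Taylor expansions with integral remainder are the same. So is the first estimate: the time-integrated temperature equation tested by $R\psi$, the Cahn--Hilliard equations by $\omega$ and $-\Delta\omega$, the nutrient equation by $\varsigma$, closed by Gronwall and the quadratic bounds from \eqref{eq:cont_dep}. Your bound on $\nu$ by the comparison argument of \eqref{eq:cont_dep} is exactly the paper's third estimate.

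The step that does not go through as stated is the control of $\psi$ in $L^\infty(0,T;\Hs)\cap L^2(0,T;\Vs)$ by repeating the third estimate of \Cref{prop:existence_lin}. That estimate is coupled. Testing the temperature equation by a multiple of $\psi$ produces $\langle\partial_t\omega,\psi\rangle_{\Vs}$. This term is cancelled only by testing the second Cahn--Hilliard equation with $\partial_t\omega$, and then the first with $\nu$ and the nutrient equation with $\partial_t\varsigma$. On the remainder system this creates $\intto R_f(\phi^h-\phi)^2\,\partial_t\omega\dx\ds$. For $\tau=0$ you have no $L^2(\Hs)$ control of $\partial_t\omega$ to absorb it. Integrating by parts in time, as for $I_3$ in \Cref{prop:existence_lin}, needs a bound of order $\norm{h}$ on $\partial_t(\phi^h-\phi)$, and \eqref{eq:cont_dep} supplies this only through $\tau^{1/2}\nabla(\mu^h-\mu)$. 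Testing the nutrient remainder with $\partial_t\varsigma$ is also problematic. It requires $\sigma R_{\k}|\theta^h-\theta|^2$ and $(\sigma^h-\sigma)(\k(\theta^h)-\k(\theta))$ to be $O(\norm{h}^2)$ in $L^2(0,T;\Hs)$. That would need order-$\norm{h}$ bounds for $\theta^h-\theta$ (and $\sigma^h-\sigma$) in spaces such as $L^4(Q)$ or $L^4(0,T;\Vs)$, which $L^\infty(0,T;\Hs)\cap L^2(0,T;\Vs)$ does not give in three dimensions.

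The missing, and much simpler, ingredient is the paper's second estimate: test the temperature remainder equation by $\psi+\ell\omega$. Since $\partial_t(\psi+\ell\omega)-\Delta\psi=0$, this gives
\begin{equation*}
\frac12\norm{(\psi+\ell\omega)(t)}_{\Hs}^2+\int_0^t\norm{\nabla\psi}_{\Hs}^2\ds=-\ell\intto\nabla\psi\cdot\nabla\omega\dx\ds\le\frac12\int_0^t\norm{\nabla\psi}_{\Hs}^2\ds+C\norm{\omega}_{L^2(\Ws)}^2 .
\end{equation*}
Combined with $\norm{\omega}_{L^\infty(\Hs)\cap L^2(\Ws)}\le C\norm{h}_{L^2(Q)}^2$ from the first estimate, this yields $\norm{\psi}_{L^\infty(\Hs)\cap L^2(\Vs)}\le C\norm{h}_{L^2(Q)}^2$. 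No estimate involving $\partial_t\omega$ or $\partial_t\varsigma$ is needed, because $\mathcal{Y}$ only asks for $C^0([0,T];\Hs)\cap L^2(0,T;\Vs)$ on $\psi$ and $\varsigma$, and $L^2(0,T;\Hs)$ on $\nu$.

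A smaller point concerns your first estimate. The bound $\norm{\sigma}_{L^\infty}\norm{\theta^h-\theta}_{L^4}^2\norm{\varsigma}_{\Hs}$ would need $\theta^h-\theta\in L^4(0,T;L^4(\Omega))$. Interpolation between $L^\infty(\Hs)$ and $L^2(\Vs)$ only gives $L^4(0,T;L^3(\Omega))$ in three dimensions. Your $L^1(0,T;\Hs)$ fallback does work, since $\norm{\sigma(\theta^h-\theta)^2}_{\Hs}\le C\norm{\sigma}_{\Vs}\norm{\theta^h-\theta}_{\Vs}^2$. So does the paper's splitting $\norm{\sigma}_{L^6}\norm{\theta^h-\theta}_{\Hs}\norm{\theta^h-\theta}_{L^6}\norm{\varsigma}_{L^6}$, with $\norm{\varsigma}_{\Vs}$ absorbed by the gradient term.
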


\begin{proof}
Let $u \in \Uad$ be fixed but arbitrary and denote $\S(u) = (\theta, \phi, \mu, \sigma)$. We aim to show that
\begin{equation}\label{eq:limit_diff}
\lim_{\norm{h}_{\U} \to 0}
\frac{\norm{\S(u+h) - \S(u) - (\zetah, \xih, \etah, \rhoh)}_{\mathcal{Y}}}{\norm{h}_{\U}} = 0,
\end{equation}
which proves the asserted Fréchet differentiability.
To this end, we set $\S(u+h) = (\thetah, \phih, \muh, \sigmah)$ and introduce the remainder terms
\begin{equation*}
    \Thetah \coloneqq \thetah - \theta - \zetah, 
    \quad \Phih \coloneqq \phih - \phi - \xih, 
    \quad \Upsilonh \coloneqq \muh - \mu - \etah,
    \quad \Pih \coloneqq \sigmah - \sigma - \rhoh.
\end{equation*}
Since $u \in \Um$ and $\Um$ is open, there exists a constant $C_{u} > 0$ such that $u+h \in \Um$ for all $h \in \U$ satisfying $\norm{h}_{\U} \leq C_u$. As we are interested in the limit as $\norm{h}_{\U} \to 0$, we may restrict ourselves, without loss of generality, to perturbations $h$ of sufficiently small norm.
We claim that
\begin{equation}\label{eq:diff_thesis}
\norm{(\Thetah, \Phih,\Upsilonh, \Pih)}_{\mathcal{Y}} \le C_M \norm{h}_{\U}^{2},
\end{equation}
for a suitable constant $C_M > 0$. Clearly, estimate \eqref{eq:diff_thesis} immediately implies \eqref{eq:limit_diff}.
In order to prove \eqref{eq:diff_thesis}, we analyse the PDE system satisfied by $(\Thetah, \Phih, \Upsilonh, \Pih)$, which is obtained by subtracting the state system written for $u$ and its linearization (cf. \Cref{prop:existence_lin}) from the state system corresponding to $u+h$. By writing the equations formally, it reads as follows:
{\allowdisplaybreaks
    \begin{subequations}\label{eq:diff_problem}
        \begin{align}
            & \partial_t (\Thetah + \ell \Phih) - \Delta \Thetah = 0, \label{eq:diff_temp} \\
            &\begin{aligned}
                \partial_t \Phih - \Delta \Upsilonh = & \, (\lambdap \sigmah - \lambdaa - \lambdae \thetah) \h(\phih) - (\lambdap \sigma - \lambdaa - \lambdae \theta) \h(\phi)\\
                & - \left[(\lambdap \rhoh - \lambdae \zetah)\h(\phi) + (\lambdap \sigma - \lambdaa - \lambdae \theta) \h'(\phi) \xih \right],
            \end{aligned} \label{eq:diff_ch1} \\
            &\tau \partial_t \Phih - \Delta \Phih + f(\phih) - f(\phi)- f'(\phi)\xih - \chi \Phih - \Lambda \Thetah =  \Upsilonh, \label{eq:diff_ch2} \\
            &\begin{aligned}
                \partial_t \Pih - \Delta(\Pih - \chi \Phih) = & \, \lambdac \big(-\sigmah \h(\phih) + \sigma \h(\phi) + \rhoh \h(\phi) + \sigma \h'(\phi) \xih \big) - \lambdab \Pih \\
                &+ \lambdad \big(-\sigmah \k(\thetah) + \sigma \k(\theta) + \rhoh \k(\theta) + \sigma \k'(\theta) \zetah \big), 
            \end{aligned}\label{eq:diff_nutrient}
        \end{align}
    \end{subequations}
    }%
    coupled with vanishing initial and Neumann boundary conditions.\\
    
    \noindent \textbf{Rewriting the nonlinearities.} We begin by rewriting the nonlinear contributions in a more convenient form. Consider first the right-hand side of \eqref{eq:diff_ch1}. By adding and subtracting suitable terms, we obtain
    \begin{equation*}
        \begin{split}
            &(\lambdap \sigmah - \lambdaa - \lambdae \thetah) \h(\phih) - (\lambdap \sigma - \lambdaa - \lambdae \theta) \h(\phi)\\
            & \quad \quad- \left[(\lambdap \rhoh - \lambdae \zetah)\h(\phi) + (\lambdap \sigma - \lambdaa - \lambdae \theta) \h'(\phi) \xih \right]\\
            & \quad = (\lambdap \sigma - \lambdaa - \lambdae \theta) \left[\h(\phih) - \h(\phi) - \h'(\phi)\xih\right]\\
            & \quad \quad + \left[\lambdap(\sigmah - \sigma) - \lambdae(\thetah - \theta)\right]\left(\h(\phih)-\h(\phi)\right) + (\lambdap \Pih - \lambdae \Thetah) \h(\phi).
        \end{split}
    \end{equation*}
    Now we take into account Taylor's expansion with integral remainder, which states that for every $g \in C^2(\RR)$ the following equality holds
    \begin{equation*}
        g(s_2) = g(s_1) + g'(s_1)(s_2 - s_1) + (s_2 - s_1)^2 \int_0^1 g''(z s_2 + (1-z)s_1)(1-z) \, \text{d}z 
    \end{equation*}
    for every $s_1, s_2 \in \RR$. Applying this formula with $g = \h$, $s_2 = \phih$, and $s_1 = \phi$, we deduce
    \begin{equation*}
        \h(\phih) - \h(\phi) - \h'(\phi) \xih = \h'(\phi) \Phih + R_{\h} (\phih - \phi)^2,
    \end{equation*}
    where the remainder term is given by
    \begin{equation*}
        R_{\h} \coloneqq \int_0^1 \h''\big(z\phih + (1-z)\phi\big)(1-z) \, \text{d}z.
    \end{equation*}
    Notice that, since $\h \in C^2(\RR)$ and both $\phih$ and $\phi$ are uniformly bounded in view of \eqref{eq:solution_estimate}, $R_{\h}$ is uniformly bounded by a constant depending only on $M$ (cf.~\eqref{betti1}). An analogous argument applies to the nonlinear potential term in \eqref{eq:diff_ch2}. Using Taylor's formula with integral remainder again, we obtain
    \begin{equation*}
        \begin{split}
            f(\phih) - f(\phi)- f'(\phi)\xih = f'(\phi)\Phih + R_{f} (\phih - \phi)^2,
        \end{split}
    \end{equation*}
    where
    \begin{equation*}
        R_{f} \coloneqq \int_0^1 f''\big(z\phih + (1-z)\phi\big) (1-z) \text{d}z.
    \end{equation*}
    Also in this case, $R_f$ is uniformly bounded.
    Proceeding similarly, we deal with the right-hand side of equation \eqref{eq:diff_nutrient}, obtaining:
    \begin{equation*}
        \begin{split}
            &\lambdac \big(-\sigmah \h(\phih) + \sigma \h(\phi) + \rhoh \h(\phi) + \sigma \h'(\phi) \xih \big) - \lambdab \Pih \\
            & \quad \quad + \lambdad \big(-\sigmah \k(\thetah) + \sigma \k(\theta) + \rhoh \k(\theta) + \sigma \k'(\theta) \zetah \big)\\
            & \quad = \lambdac \Big\{-\sigma\left[ \h(\phih) - \h(\phi) - \h'(\phi)\xih\right] - (\sigmah - \sigma)  \big(\h(\phih) - \h(\phi)\big) - \Pih \h(\phi)\Big\}\\
            & \quad \quad - \lambdab \Pih + \lambdad \Big\{ - \sigma \left[\k(\thetah) - \k(\theta) - \k'(\theta)\zetah\right] - (\sigmah - \sigma)\big(\k(\thetah) - \k(\theta)\big) - \Pih \k(\theta)\Big\}\\
            & \quad = \lambdac \Big\{-\sigma\left[\h'(\phi)\Phih + R_{\h}(\phih - \phi)^2\right] - (\sigmah - \sigma)  \big(\h(\phih) - \h(\phi)\big) - \Pih \h(\phi)\Big\}\\
            & \quad \quad - \lambdab \Pih + \lambdad \Big\{ - \sigma \left[\k'(\theta)\Thetah + R_{\k}(\thetah - \theta)^2\right] - (\sigmah - \sigma)\big(\k(\thetah) - \k(\theta)\big) - \Pih \k(\theta)\Big\},
        \end{split}
    \end{equation*}
    where $R_{\k}$ is defined as before, i.e.,
    \begin{equation*}
        R_{\k} \coloneqq \int_0^1 \k''\big(z\thetah + (1-z)\theta\big)(1-z) \,\text{d}x,
    \end{equation*}
    and uniformly bounded. With the notation introduced, $(\Thetah, \Phih,\Upsilonh, \Pih)$ satisfies the following PDE system written in a variational formulation:
    {
    \allowdisplaybreaks
    \begin{subequations}\label{eq:diff_problem_integrals}
        \begin{align}
            & \duality{\partial_t (\Thetah + \ell \Phih)}{v}_{\Vs} + \into \nabla \Thetah \cdot \nabla v \dx = 0, \label{eq:diff_temp_weak} \\
            &\begin{aligned}
                &\duality{\partial_t \Phih}{v}_{\Vs} + \into \nabla \Upsilonh \cdot \nabla v \dx \\
                &\quad = \into (\lambdap \sigma - \lambdaa - \lambdae \theta) \left[\h'(\phi) \Phih + R_{\h} (\phih -\phi)^2\right] v \dx\\
                & \quad \quad + \into \left[\lambdap(\sigmah - \sigma) - \lambdae(\thetah - \theta)\right]\left(\h(\phih)-\h(\phi)\right)v \dx\\
                &\quad \quad + \into (\lambdap \Pih - \lambdae \Thetah) \h(\phi) v \dx,
                \end{aligned} \label{eq:diff_ch1_weak} \\
            &\begin{aligned}
                 &\tau \duality{\partial_t \Phih}{v}_{\Vs} + \!\into \! \nabla \Phih \cdot \nabla v \dx \\
                 &\quad + \!\into \!\! \left[ f'(\phi)\Phih + R_{f} (\phih - \phi)^2 - \chi \Phih - \Lambda \Thetah \right] v \dx = \into \Upsilonh \, v \dx, 
            \end{aligned}\label{eq:diff_ch2_weak} \\
            &\begin{aligned}
                &\duality{\partial_t \Pih}{v}_{\Vs} + \into \nabla (\Pih - \chi \Phih) \cdot \nabla v \dx\\
                &\quad = \into \lambdac \Big\{-\sigma\left[\h'(\phi)\Phih + R_{\h}(\phih - \phi)^2\right] - (\sigmah - \sigma)  \big(\h(\phih) - \h(\phi)\big)\Big\} v \dx\\
                & \quad \quad - \into \Pih \big(\lambdac\h(\phi) + \lambdab + \lambdad \k(\theta) \big)v \dx\\
                & \quad \quad + \into \lambdad \Big\{ - \sigma \left[\k'(\theta)\Thetah + R_{\k}(\thetah - \theta)^2\right] - (\sigmah - \sigma)\big(\k(\thetah) - \k(\theta)\big)\Big\} v \dx,
            \end{aligned}\label{eq:diff_nutrient_weak}
        \end{align}
    \end{subequations}
    }%
    for every $v \in \Vs$, a.e. in $(0,T)$. In order to prove the inequality \eqref{eq:diff_thesis}, we are going to perform some a priori estimates.\\

    \noindent \textbf{First estimates.} We integrate the equation \eqref{eq:diff_temp_weak} in time, multiply the resulting identity by a (large) constant $R>0$ to be chosen later, and choose $v = \Thetah$ as a test function. Then, we integrate once more in time over the interval $(0,t)$. Exploiting the homogeneous initial condition for $\Thetah$, we obtain: 
    \begin{equation}\label{eq:diff_est1_1}
        \begin{split}
            &R \intto |\Thetah|^2 \dx \ds + \frac{R}{2} \into |\nabla (1 \ast_t \Thetah)|^2 \dx\\
            &\quad = R \ell \intto \Phih \Thetah \dx \ds \leq \frac R 4 \intto |\Thetah|^2 \dx \ds + R\ell^2 \intto |\Phih|^2 \dx \ds,
        \end{split}
    \end{equation}
    where we have bounded the right-hand side by the Young inequality.
    
    Next, we sum \eqref{eq:diff_ch1_weak} tested with $v = \Phih$ and \eqref{eq:diff_ch2_weak} tested with $v = - \Delta \Phih$: the use of the latter function is not allowed but \eqref{eq:diff_ch2_weak} can be equivalently rewritten as the equation \[ \tau \partial_t \Phih - \Delta \Phih +  f'(\phi)\Phih + R_{f} (\phih - \phi)^2 - \chi \Phih - \Lambda \Thetah  = \Upsilonh \quad \hbox{a.e. in $Q$},\]  and here we can test by $( - \Delta \Phih)$. Integrating by parts, a cancellation of the terms involving $\Upsilonh$ occurs. The integration in time over $(0,t)$ and the use of the null initial conditions entail that
    \begin{equation}\label{eq:diff_est1_2}
        \begin{split}
            &\frac{1}{2} \into |\Phih|^2 \dx + \frac{\tau}{2} \into |\nabla \Phih|^2 \dx + \intto |-\Delta \Phih|^2 \dx \ds\\
            & \quad = \intto (\lambdap \sigma - \lambdaa - \lambdae \theta) \left[\h'(\phi) \Phih + R_{\h} (\phih -\phi)^2\right] \Phih \dx \ds\\
            & \quad \quad + \intto \left[\lambdap(\sigmah - \sigma) - \lambdae(\thetah - \theta)\right]\left(\h(\phih)-\h(\phi)\right) \Phih \dx \ds\\
            &\quad \quad + \intto (\lambdap \Pih - \lambdae \Thetah) \h(\phi) \Phih \dx \ds\\
            & \quad \quad - \intto \left[ f'(\phi)\Phih + R_{f} (\phih - \phi)^2 - \chi \Phih - \Lambda \Thetah \right] (-\Delta \Phih) \dx \ds\\
            & \quad \eqqcolon I_1 + I_2 + I_3 + I_4.
        \end{split}
    \end{equation}
    Our next goal is to estimate the terms $I_1, \dots, I_4$ separately. 
    Concerning the term $I_1$, we first exploit that $\theta$ is uniformly bounded in $L^{\infty}(Q)$ by virtue of \eqref{eq:solution_estimate} in \Cref{thm:wellposedness}, that $\h$ is Lipschitz continuous in view of hypothesis~\ref{hyp:nonlinearities}, and that the remainder $R_{\h}$ is uniformly bounded, as observed above. We then apply H\"older’s inequality together with the continuous embeddings $\Vs \hookrightarrow L^4(\Omega)$ and $\Ws \hookrightarrow L^{\infty}(\Omega)$. Recalling again from \eqref{eq:solution_estimate} that $\sigma$ is uniformly bounded in $L^{\infty}(0,T;\Vs)$, we deduce:
    \begin{equation*}
        \begin{split}
            |I_1| &\leq C \intto (|\sigma| + 1 ) \big(|\Phih| + |\phih - \phi|^2 \big)|\Phih| \dx \ds\\
            &\leq C \intt \big(\norm{\sigma}_{L^4(\Omega)} + 1\big)\big( \norm{\Phih}_{\Hs} + \norm{\phih - \phi}_{\Hs}\norm{\phih - \phi}_{L^{\infty}(\Omega)}\big)\norm{\Phih}_{L^4(\Omega)} \ds\\
            & \leq C\left\{ \intt \norm{\Phih}_{\Hs}^2 \ds + \norm{\phih - \phi}_{L^{\infty}(\Hs)}^2\intt \norm{\phih - \phi}_{\Ws}^2 \ds + \intt \norm{\Phih}_{\Vs}^2 \ds\right\}.
        \end{split}
    \end{equation*}
    In order to bind the last addend, we invoke the inequality \eqref{pier6} and use the continuous dependence estimate \eqref{eq:cont_dep} from \Cref{thm:wellposedness}, obtaining
    \begin{equation}
        \begin{split}
            |I_1| &\leq \frac18 \intt \norm{-\Delta \Phih}_{\Hs}^2 \ds + C \intt \norm{\Phih}_{\Hs}^2 \ds + C \norm{h}_{L^2(\Hs)}^4
        \end{split}
    \end{equation}
    To estimate $I_2$, we again exploit the Lipschitz continuity of $\h$, H\"older’s and Young’s inequalities (with a small parameter $\epsilon>0$ to be fixed later), the interpolation inequality~\eqref{pier7}, and the continuous dependence estimate \eqref{eq:cont_dep}. We deduce that
    \begin{equation}
        \begin{split}
            |I_2| &\leq C \intto \big(|\sigmah - \sigma| + |\thetah - \theta|\big)|\phih - \phi||\Phih| \dx \ds\\
            & \leq C \intt \big( \norm{\sigmah - \sigma}_{\Hs} + \norm{\thetah - \theta}_{\Hs}\big) \norm{\phih - \phi}_{\Hs} \norm{\Phih}_{L^{\infty}(\Omega)} \ds\\
            & \leq \intt \norm{\Phih}_{L^{\infty}(\Omega)}^2 \ds + C \norm{\phih - \phi}_{L^{\infty}(\Hs)}^2 \intt \big( \norm{\sigmah - \sigma}_{\Hs}^2 + \norm{\thetah - \theta}_{\Hs}^2\big) \ds    \\
            & \leq \frac18 \intt \norm{-\Delta \Phih}_{\Hs}^2 \ds + C \intt \norm{\Phih}_{\Hs}^2 \ds + C \norm{h}_{L^2(\Hs)}^4 .\\
        \end{split}
    \end{equation}
    We now turn to $I_3$. In this case, it suffices to exploit the boundedness of $\h$, together with Hölder’s and Young’s inequalities. In view of \eqref{eq:diff_est1_1}, which later will be added to \eqref{eq:diff_est1_2}, we infer that 
    \begin{equation}\label{pier11}
        \begin{split}
            |I_3| &\leq C \intto \big(|\Pih| + |\Thetah|\big)|\Phih| \dx \ds\\ &\leq \frac R 4  \intt \norm{\Thetah}_{\Hs}^2 \ds + C_{R} \intt \big(\norm{\Pih}_{\Hs}^2 + \norm{\Phih}_{\Hs}^2 \big)\ds.  
        \end{split}
    \end{equation}
    Finally, $I_4$ can be handled arguing as before. We use the continuity of $f'$ ensured by hypothesis~\ref{hyp:beta_pi}, the uniform boundedness of $\phi$ provided by \eqref{eq:solution_estimate}, and the uniform boundedness of the remainder $R_f$, as discussed above. Applying H\"older’s and Young’s inequalities and the continuous dependence estimate \eqref{eq:cont_dep}, we arrive at
    \begin{equation}\label{pier12}
        \begin{split}
            |I_4| &\leq C \intto \big( |\Phih| + |\phih - \phi|^2 + |\Thetah|\big)|-\Delta \Phih| \dx \ds\\
            & \leq C \intt \big( \norm{\Phih}_{\Hs} + \norm{\phih - \phi}_{\Hs} \norm{\phih - \phi}_{L^\infty(\Omega)} + \norm{\Thetah}_{\Hs}\big)\norm{-\Delta \Phih}_{\Hs} \ds\\
            & \leq \frac18 \intt \norm{-\Delta\Phih}_{\Hs}^2 \ds + \overline{C} \intt \big(\norm{\Phih}_{\Hs}^2 + \norm{\phih - \phi}_{\Hs}^2 \norm{\phih - \phi}_{\Ws}^2 + \norm{\Thetah}_{\Hs}^2\big) \ds\\
            & \leq \frac18 \intt \norm{-\Delta\Phih}_{\Hs}^2 \ds + \overline{C} \intt \big(\norm{\Phih}_{\Hs}^2+ \norm{\Thetah}_{\Hs}^2\big) \ds + C \norm{h}_{L^2(\Hs)}^4
        \end{split}
    \end{equation}
    for some suitable constant $\overline{C}$.
    The next step is to choose $v = \Pih$ in the equation \eqref{eq:diff_nutrient_weak} and integrate in time over  the time interval $(0,t)$, leading to
    \begin{align}
        &\frac{1}{2} \into |\Pih|^2 \dx \ds + \intto |\nabla \Pih|^2 \dx \ds + \lambdab \into |\Pih|^2 \dx \ds \notag \\
        &\quad = \chi \intto (- \Delta \Phih) \Pih \dx \ds 
        - \intto \lambdac \sigma\left[\h'(\phi)\Phih + R_{\h}(\phih - \phi)^2\right] \Pih \dx \ds \notag\\
       \begin{split}\label{eq:diff_est1_3}
            & \quad \quad - \intto \lambdac (\sigmah - \sigma)  \big(\h(\phih) - \h(\phi)\big) \Pih\dx \ds\\ 
            &\quad \quad - \intto |\Pih|^2 \big(\lambdac\h(\phi)  + \lambdad \k(\theta) \big) \dx \ds
       \end{split}\\
        & \quad \quad - \intto \lambdad \sigma \left[\k'(\theta)\Thetah + R_{\k}(\thetah - \theta)^2\right]\Pih \dx \ds \notag \\
        &\quad \quad - \intto \lambdad(\sigmah - \sigma)\big(\k(\thetah) - \k(\theta)\big) \Pih \dx \ds \eqqcolon \sum_{j=5}^{10} I_j.\notag
    \end{align}
    We next estimate the terms $I_5,\dots,I_{10}$.
    For $I_5$, we directly apply H\"older’s and Young’s inequalities with a small constant in front of the term with $(-\Delta \Phih )$, as before. This yields
    \begin{equation}
        \begin{split}
            |I_5| &\leq \chi \intto |-\Delta \Phih|\, |\Pih| \dx \ds \leq \chi \intt \norm{-\Delta \Phih}_{\Hs} \norm{\Pih}_{\Hs} \ds\\
            &\leq \frac18 \intt \norm{-\Delta \Phih}_{\Hs}^2 \ds + C \intt \norm{\Pih}_{\Hs}^2 \ds. 
        \end{split}
    \end{equation}
    The term $I_6$ is analogous to $I_1$, with the only difference that no interpolation inequality is needed. Thanks to \eqref{eq:solution_estimate} and \eqref{eq:cont_dep}, we have:
    \begin{equation}
        \begin{split}
            |I_6| &\leq C \intto |\sigma| \big(|\Phih| + |\phih - \phi|^2\big)|\Pih| \dx \ds\\
            &\leq C \intt \norm{\sigma}_{L^4(\Omega)} \big( \norm{\Phih}_{\Hs} + \norm{\phih - \phi}_{\Hs} \norm{\phih - \phi}_{L^{\infty}(\Omega)}\big) \norm{\Pih}_{L^4(\Omega)} \ds\\
            & \leq C \intt \big( \norm{\Phih}_{\Hs} + \norm{\phih - \phi}_{\Hs} \norm{\phih - \phi}_{\Ws}\big) \norm{\Pih}_{\Vs} \ds\\
            & \leq \frac18  \intt \norm{\Pih}_{\Vs}^2 \ds + C \intt \norm{\Phih}_{\Hs}^2 \ds + C \norm{h}_{L^2(\Hs)}^4.
        \end{split}
    \end{equation}
    For $I_7$, we exploit the Lipschitz continuity of $\h$, followed by H\"older’s and Young’s inequalities, as well as the continuous dependence estimate \eqref{eq:cont_dep} from \Cref{thm:wellposedness}. We infer that
    \begin{equation}
        \begin{split}
            |I_7| &\leq C \intto |\sigmah - \sigma|\, |\phih - \phi| \,|\Pih| \dx \ds\\
            &\leq C \intt \norm{\sigmah - \sigma}_{L^4(\Omega)} \norm{\phih - \phi}_{\Hs} \norm{\Pih}_{L^4(\Omega)} \ds\\
            &\leq C \norm{\phih - \phi}_{L^{\infty}(\Hs)} \intt \norm{\sigmah - \sigma}_{\Vs} \norm{\Pih}_{\Vs} \ds\\
            & \leq \frac18 \intt \norm{\Pih}_{\Vs}^2 \ds + C \norm{\phih - \phi}_{L^{\infty}(\Hs)}^2 \intt \norm{\sigmah - \sigma}_{\Vs}^2 \ds\\
            & \leq \frac18 \intt \norm{\Pih}_{\Vs}^2 \ds + C \norm{h}_{L^2(\Hs)}^4.
        \end{split}
    \end{equation}
    In $I_8$, we simply use the boundedness of $\h$ and $\k$  ensured by Hypothesis~\ref{hyp:nonlinearities}. We immediately deduce:
    \begin{equation}
        \begin{split}
            |I_8| \leq C \intt \norm{\Pih}_{\Hs}^2 \ds.
        \end{split}
    \end{equation}
    The term $I_9$ can be handled in a way similar to $I_6$, invoking also the continuous embedding $\Vs \hookrightarrow L^6(\Omega)$. We obtain
    \begin{equation}\label{pier13}
        \begin{split}
            |I_9| &\leq C \intto |\sigma| \big( |\Thetah| + |\thetah - \theta|^2\big) |\Pih| \dx \ds\\
            & \leq C \intt \norm{\sigma}_{L^4(\Omega)} \norm{\Thetah}_{\Hs} \norm{\Pih}_{L^4(\Omega)} \ds\\
            &\quad + C \intt \norm{\sigma}_{L^6(\Omega)}  \norm{\thetah - \theta}_{\Hs} 
            \norm{\thetah - \theta}_{L^6(\Omega)}\norm{\Pih}_{L^6(\Omega)} \ds\\
            & \leq C\intt \bigl( \norm{\Thetah}_{\Hs} + \norm{\thetah - \theta}_{\Hs} \norm{\thetah - \theta}_{\Vs} \bigr)\norm{\Pih}_{\Vs} \ds\\
            &\leq \frac18 \intt \norm{\Pih}_{\Vs}^2 \ds + \widehat{C\,}  \intt \norm{\Thetah}_{\Hs}^2 \ds + C \norm{h}_{L^2(\Hs)}^4
        \end{split}
    \end{equation}
    for a suitable constant $\widehat{C\,}$.
    Finally, we deal with $I_{10}$ as we did for $I_7$, finding
    \begin{equation}
        \begin{split}
            |I_{10}| &\leq C \intto |\sigmah - \sigma| \, |\thetah - \theta| \, |\Pih| \dx \ds\\
            &\leq C \intt \norm{\sigmah - \sigma}_{L^4(\Omega)} \norm{\thetah - \theta}_{\Hs} \norm{\Pih}_{L^4(\Omega)} \ds\\
            & \leq \frac18 \intt \norm{\Pih}_{\Vs}^2 \ds + C \norm{h}_{L^2(\Hs)}^4.
        \end{split}
    \end{equation}
    At this point, we sum the equations \eqref{eq:diff_est1_1}, \eqref{eq:diff_est1_2}, \eqref{eq:diff_est1_3}, taking into account the estimates we performed on $I_1, \dots, I_{10}$ to bind the resulting right-hand side. Then, we choose $R$ large enough so that
    \[ \frac R 2 > \overline C + \widehat{C\,}\]
    (please check \eqref{eq:diff_est1_1}, \eqref{pier11}, \eqref{pier12}, \eqref{pier13} once more).
    Thus,  renaming the constants and rearranging the terms, we have: 
    \begin{equation}
        \begin{split}
            &\into |\nabla (1 \ast_t \Thetah)|^2 \dx
            + \into |\Phih|^2 \dx + \tau \into |\nabla \Phih|^2 \dx 
            + \into |\Pih|^2 \dx\\
            & \quad \quad  + \intto |\Thetah|^2 \dx \ds + \intto |-\Delta \Phih|^2 \dx \ds + \intto |\nabla \Pih|^2 \dx \ds\\
            & \quad\leq C \biggl(\norm{h}_{L^2(\Hs)}^4 + \intt \norm{\Phih}_{\Hs}^2 \ds + \intt \norm{\Pih}_{\Hs}^2 \ds \biggr).
        \end{split}
    \end{equation}
    Then, invoking the Gronwall Lemma and recalling~\eqref{pier4}, we deduce that
    \begin{equation}\label{eq:diff_est1}
        \begin{split}
            &\norm{\Thetah}_{L^2(\Hs)} + \norm{1 \ast_{t} \Thetah}_{L^{\infty}(\Vs)} +  \norm{\Phih}_{L^{\infty}(\Hs) \cap L^2(\Ws)} + \tau^{1/2} \norm{\Phih}_{L^{\infty}(\Vs)}\\
            & \quad + \norm{\Pih}_{L^{\infty}(\Hs) \cap L^2(\Vs)} \leq C \norm{h}_{L^2(\Hs)}^2.
        \end{split} 
    \end{equation}
    \smallskip

\noindent \textbf{Second estimate.} We choose $v = \Thetah + \ell \Phih$ in the equation \eqref{eq:diff_temp_weak}, and integrate in time over the interval $(0,t)$, obtaining:
\begin{equation*}
    \begin{split}
        &\frac{1}{2} \into |\Thetah + \ell \Phih|^2 \dx + \intto |\nabla \Thetah|^2 \dx \ds \\
        &\quad= -\ell \intto \nabla \Thetah \cdot \nabla \Phih \dx \ds \leq \frac{1}{2} \intto |\nabla \Thetah|^2 \dx \ds + C \intto |\nabla \Phih|^2 \dx,
    \end{split}
\end{equation*}
whence it follows that the left-hand side is bounded by $C \norm{h}_{L^2(\Hs)}^4$ thanks to the control of $\norm{\nabla \Phih}_{L^{2}(\Hs)}$ in \eqref{eq:diff_est1}. Therefore, we have that
\begin{equation} \label{eq:diff_est2}
    \begin{split}
        \norm{\Thetah}_{L^{\infty}(\Hs)\cap L^2(\Vs)} &\leq C\bigl(\norm{\ell \Phih}_{L^{\infty}(\Hs)} + \norm{\Thetah + \ell \Phih}_{L^{\infty}(\Hs)}\bigr) + C\norm{\nabla \Thetah }_{L^2(\Hs)}\\
        &\leq C \norm{\Phih}_{L^{\infty}(\Hs) \cap L^2(\Vs}) \leq C \norm{h}_{L^2(\Hs)}^2.
    \end{split}
\end{equation}
\smallskip

\noindent \textbf{Third estimate.} We sum the equation \eqref{eq:diff_ch1_weak} tested with $v = \tau \Upsilonh$ and the equation \eqref{eq:diff_ch2_weak} tested with $v = - \Upsilonh$. Notice that we are going to keep track of $\tau$, whence it is clear that the following estimates hold both when the parameter is strictly positive and when it vanishes. Estimating the right-hand side as before, we have:
\begin{align*}
        &\tau \into |\nabla \Upsilonh|^2 \dx  + \into |\Upsilonh|^2 \dx \\
        &\quad = \tau \into (\lambdap \sigma - \lambdaa - \lambdae \theta) \left[\h'(\phi) \Phih + R_{\h} (\phih -\phi)^2\right] \Upsilonh \dx \\
        & \quad \quad + \tau \into \left[\lambdap(\sigmah - \sigma) - \lambdae(\thetah - \theta)\right]\left(\h(\phih)-\h(\phi)\right) \Upsilonh \dx \\
        &\quad \quad + \tau\into (\lambdap \Pih - \lambdae \Thetah) \h(\phi) \Upsilonh \dx \\
        &\quad \quad +\into \left[ -\Delta \Phih + f'(\phi)\Phih + R_{f} (\phih - \phi)^2 - \chi \Phih - \Lambda \Thetah \right] \Upsilonh \dx \\
        &\quad \leq C \tau \into \big(|\sigma| + 1\big) \big(|\Phih| + |\phih - \phi|^2\big) |\Upsilonh| \dx \\
        &\quad \quad + C \tau \into  \left[ \big(|\sigmah - \sigma| + |\thetah - \theta|\big)|\phih - \phi| + |\Pih| + |\Thetah| \right]  |\Upsilonh| \dx \\
        &\quad \quad+ C \into \left( |-\Delta \Phih| + |\Phih| + |\phih - \phi|^2 + |\Thetah|\right) |\Upsilonh| \dx .
    \end{align*}
    Then, we apply the H\"older and the Young inequalities with a small parameter $\epsilon > 0$ fixed but yet to be chosen, obtaining:
    \begin{equation*}
        \begin{split}
            &\tau \into |\nabla \Upsilonh|^2 \dx  + \into |\Upsilonh|^2 \dx \\
            &\quad \leq C \tau \Big[ \big(\norm{\sigma}_{L^{\infty}(L^6(\Omega))} + 1 \big)\big(\norm{\Phih}_{L^3(\Omega)} + \norm{\phih - \phi}_{L^6(\Omega)}^2\big)\Big]\norm{\Upsilonh}_{\Hs}\\
            &\quad \quad +C\tau \Big[\big(\norm{\sigmah - \sigma}_{\Hs} + \norm{\thetah - \theta}_{\Hs} \big) \norm{\phih - \phi}_{L^\infty (\Omega)} + \norm{\Pih}_{\Hs} + \norm{\Thetah}_{\Hs}\Big]\norm{\Upsilonh}_{\Hs}\\
            &\quad \quad + C \Big[\norm{-\Delta \Phih}_{\Hs} + \norm{\Phih}_{\Hs} + \norm{\phih - \phi}_{L^4(\Omega)}^2 + \norm{\Thetah}_{\Hs}\Big]\norm{\Upsilonh}_{\Hs}\\
            &\quad \leq \left(\epsilon\tau + \frac{1}{2}\right)\norm{\Upsilonh}_{\Hs}^2 + C_{\epsilon}\tau \left[\norm{\Phih}_{\Vs}^2 + \norm{\phih - \phi}_{\Vs}^4\right]\\
            &\quad\quad + C_{\epsilon}\tau \left[ \big(\norm{\sigmah - \sigma}_{L^{\infty}(\Hs)}^2 + \norm{\thetah - \theta}_{L^{\infty}(\Hs)}^2 \big) \norm{\phih - \phi}_{\Ws}^2 + \norm{\Pih}_{\Hs}^2 + \norm{\Thetah}_{\Hs}^2\right]\\
            &\quad\quad + C \Big[\norm{-\Delta \Phih}_{\Hs}^2 + \norm{\Phih}_{\Hs}^2 + \norm{\phih - \phi}_{\Vs}^4 + \norm{\Thetah}_{\Hs}^2\Big].
        \end{split}
    \end{equation*}
    If $\tau = 0$, the terms with factor $\tau$ disappear. If $\tau > 0$, we choose $\epsilon$ small enough (e.g., $\epsilon = 1/ (4\tau)$), and then we proceed in the same way. Integrating in time and exploiting the estimates \eqref{eq:cont_dep}, \eqref{eq:diff_est1}, \eqref{eq:diff_est2}, as well as 
    \begin{equation*}
        \norm{v}_{L^4(\Vs)} \leq C \norm{v}_{L^{\infty}(\Hs)}^{1/2} \norm{v}_{L^2(\Ws)}^{1/2}  \qquad \forall v \in L^{\infty}(0,T; \Hs) \cap L^2(0,T;\Ws)
    \end{equation*}
    derived straightforwardly from the Gagliardo--Nirenberg interpolation inequality (see, e.g., \cite{AdamsFournier2003}), we have:
    \begin{equation}\label{eq:diff_est3}
        \begin{split}
            \tau \norm{\nabla \Upsilonh}_{L^2(\Hs)}^2 + \norm{\Upsilonh}_{L^2(\Hs)}^2 \leq C (1+\tau) \norm{h}_{L^2(\Hs)}^4.
        \end{split}
    \end{equation}
    \noindent Collecting the estimates \eqref{eq:diff_est1}, \eqref{eq:diff_est2}, \eqref{eq:diff_est3}, the inequality \eqref{eq:diff_thesis} follows. Thus, the limit \eqref{eq:limit_diff} is proved, and the proof is complete.
\end{proof}

\section{First-order necessary conditions of optimality}
\label{sec:first}

In this section, we derive first-order necessary optimality conditions characterising the optimal controls whose existence has been established in \Cref{thm:existence_opt_control}. 
Let $u^* \in \Uad$ be an optimal control and let $u \in \Uad$ be arbitrary. Since the admissible set $\Uad$ is convex, for every $s \in (0,1)$ the convex combination $(1-s) u^* + s u$ still belongs to $\Uad$. Hence, by the optimality of $u^*$, we have
\begin{equation}\label{eq:opt_cond_0}
    \frac{\J(u^* + s (u - u^*)) - \J(u^*)}{s} \ge 0.
\end{equation}
Recall that $\J(u) = \JJ(\S_1(u), \S_2(u), u)$. The cost functional $\JJ$, regarded as a mapping from
\begin{equation*}
    \mathcal{Z} \coloneqq C^0([0,T];\Hs) \times C^0([0,T];\Hs) \times L^2(0,T;\Hs)
\end{equation*}
into $\RR$, is convex and Fréchet differentiable. Moreover, by \Cref{thm:diff_solution_op}, the solution operator $\S \colon \Um \to \mathcal{Y}$ is Fréchet differentiable. In particular, its components $\S_1$ and $\S_2$ are Fréchet differentiable as well. Owing to the continuous embeddings
\begin{alignat*}{2}
        &C^0([0,T]; \Hs) \cap L^2(0,T;\Vs) &\hookrightarrow C^0([0,T];\Hs),\\
        &C^0([0,T]; \Hs) \cap L^2(0,T;\Ws) &\hookrightarrow C^0([0,T];\Hs),
\end{alignat*}
the mappings $\S_1$ and $\S_2$ are in fact Fréchet differentiable also as operators from $\Um$ into $C^0([0,T];\Hs)$. Since the composition of Fréchet differentiable mappings is again Fréchet differentiable, it follows that $\J$ is Fréchet differentiable on $\Um$.
Hence, passing to the limit in \eqref{eq:opt_cond_0} as $s \to 0^+$, we obtain
\begin{equation*}
    D\J(u^*)[u - u^*] \geq 0
\end{equation*}
for all $u \in \Uad$.
By explicitly computing this derivative via the chain rule, we arrive at the following result, which provides a first version of the optimality necessary condition we are looking for.

\begin{prop}
    Let $u^* \in \Uad$ be an optimal control for the problem \eqref{eq:optimal_control} with associated optimal state $(\theta^*, \phi^*, \mu^*, \sigma^*)$. Then, for every $u \in \Uad$, the following inequality is satisfied 
    \begin{equation}\label{eq:opt_cond0}
        \begin{split}
            &b_1 \intTo (\theta^* - \theta_Q) \zeta \dx \dt + b_2\into (\theta^*(T) -\theta_{\Omega}) \zeta(T) \dx\\
            &\quad + b_3 \intTo (\phi^* - \phi_Q)\xi \dx \dt + b_4\into (\phi^*(T) -\phi_{\Omega}) \xi(T) \dx\\
            &\quad + b_5 \intTo u^*(u-u^*) \dx \dt \geq 0,
        \end{split}
    \end{equation}
    where $\zeta$ and $\xi$ are the first two components of the solution to the linearised system in $u^*$ (see \eqref{eq:lin_problem_integrals}) for $h = u - u^*$.
\end{prop}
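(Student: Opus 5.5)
The plan is to compute $D\J(u^*)[h]$ for $h = u - u^*$ by the chain rule, using the differentiability already established, and then insert it into the inequality $D\J(u^*)[u-u^*]\ge 0$ derived just before the statement.

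\textbf{Step 1: the partial derivatives of $\JJ$.} I will first differentiate $\JJ$ as a map on $\mathcal{Z}$. Each term of $\JJ$ is a squared $L^2$-type norm of an affine expression: an $L^2(Q)$ norm, or an $L^2(\Omega)$ norm evaluated at $t=T$. Evaluation at $t=T$ is a bounded linear map from $C^0([0,T];\Hs)$ into $\Hs$. Hence $\JJ$ is a continuous quadratic functional. Its Fréchet derivative at $(\theta^*,\phi^*,u^*)$ in the direction $(k_1,k_2,k_3)$ is
\begin{equation*}
b_1\intTo(\theta^*-\theta_Q)k_1 \dx\dt + b_2\into(\theta^*(T)-\theta_\Omega)k_1(T)\dx + b_3\intTo(\phi^*-\phi_Q)k_2\dx\dt + b_4\into(\phi^*(T)-\phi_\Omega)k_2(T)\dx + b_5\intTo u^* k_3\dx\dt .
\end{equation*}
This follows by expanding $\tfrac12\|a+k\|^2-\tfrac12\|a\|^2=(a,k)+\tfrac12\|k\|^2$. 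The quadratic remainder is $o(\|k\|_{\mathcal{Z}})$. The data $\theta_Q,\phi_Q\in L^2(Q)$ and $\theta_\Omega,\phi_\Omega\in \Vs\subset\Hs$ from \ref{hyp:cost-fun} ensure that every term is finite.

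\textbf{Step 2: the chain rule.} Next I write $\J(u)=\JJ(\S_1(u),\S_2(u),\iota u)$, where $\iota$ is the continuous linear embedding $L^\infty(Q)\hookrightarrow L^2(Q)$. By \Cref{thm:diff_solution_op} and the embeddings noted above, $D\S_1(u^*)[h]=\zeta$ and $D\S_2(u^*)[h]=\xi$ in $C^0([0,T];\Hs)$. Here $(\zeta,\xi,\eta,\rho)$ solves the linearised system at $u^*$ with datum $h$. The derivative of $\iota$ is $\iota$ itself. The chain rule then gives $D\J(u^*)[h]$ as the formula of Step 1 with $k_1=\zeta$, $k_2=\xi$ and $k_3=h$.

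\textbf{Step 3: conclusion.} Taking $h=u-u^*\in\U$ and using $D\J(u^*)[u-u^*]\ge0$ yields \eqref{eq:opt_cond0}. The point-evaluations $\zeta(T)$ and $\xi(T)$ make sense because $\zeta\in C^0([0,T];\Hs)$ by \Cref{prop:existence_lin}, and $\xi\in C^0([0,T];\Hs)$ since $\xi\in H^1(0,T;\Vs')\cap L^2(0,T;\Vs)$.

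\textbf{Main obstacle.} The only delicate point is that the difference quotient in \eqref{eq:opt_cond0} must be taken along admissible directions inside the open set $\Um$. This holds because $\Uad\subseteq\Um$ and $\Uad$ is convex, so $u^*+s(u-u^*)\in\Uad\subseteq\Um$ for every $s\in(0,1)$. Differentiability at $u^*$ then gives
\begin{equation*}
\lim_{s\to0^+}\frac{\J(u^*+s(u-u^*))-\J(u^*)}{s}=D\J(u^*)[u-u^*].
\end{equation*}
Everything else is routine.
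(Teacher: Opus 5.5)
Your proposal is correct and follows the paper's own argument. The paper also starts from the inequality $D\J(u^*)[u-u^*]\ge 0$, obtained from the convexity of $\Uad\subseteq\Um$ and the Fr\'echet differentiability of $\J$, and computes this derivative by the chain rule using \Cref{thm:diff_solution_op} and the embeddings of the first two components of $\mathcal{Y}$ into $C^0([0,T];\Hs)$. You simply spell out what the paper leaves implicit, namely the explicit derivative of the quadratic functional $\JJ$ on $\mathcal{Z}$ and the time continuity of $\zeta$ and $\xi$ needed to make sense of the terms at $t=T$.
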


\noindent This characterisation is not yet fully satisfactory, as it requires solving infinitely many linearised systems, one for each $u \in \Uad$.
To reformulate condition \eqref{eq:opt_cond0} removing the linearised variables,  we introduce the adjoint system \eqref{eq:adj_system}--\eqref{eq:adj_final_cond}.

\subsection{The adjoint system}
We denote by $(\z, \p, \q, \r)$ the adjoint state variables, which are associated with a control $u^* \in \Uad$ and the corresponding state $(\theta^*, \phi^*, \mu^*, \sigma^*)$. The formal equations and conditions for the backward-in-time adjoint system are the following:
\begin{subequations}\label{eq:adj_system}
    \begin{align}
        &-\partial_t \z - \Delta \z = \Lambda \q - \lambdae \h(\phi^*) \p - \lambdad \sigma^*\k'(\theta^*)\r + b_1 (\theta^* - \theta_Q),\label{eq:adj_temp}\\
        &\begin{aligned}
             {}-\partial_t \left(\p + \tau \q  + \ell \z\right) - \Delta \left(\q - \chi \r\right) = & \ (\lambdap \sigma^* - \lambdaa - \lambdae \theta^*)\h'(\phi^*)\p - f'(\phi^*)\q\\
             &  - \lambdac \sigma^* \h'(\phi^*)\r + b_3(\phi^* - \phi_Q),
        \end{aligned}\label{eq:adj_ch1}\\
        &-\Delta \p = \q, \label{eq:adj_ch2}\\
        &-\partial_t \r - \Delta \r = \lambdap  \h(\phi^*) \p + \chi \q- \lambdac \h(\phi^*)\r - \lambdab \r - \lambdad \k(\theta^*)\r, \label{eq:adj_nutrient}
    \end{align}
\end{subequations}
posed in $Q$, and complemented with the homogeneous Neumann boundary conditions
\begin{equation}\label{eq:adj_bound_cond}
    \partial_{\nnu} \z = \partial_{\nnu} \p = \partial_{\nnu} \q = \partial_{\nnu} \r = 0, 
\end{equation}
set on $\Sigma$, together with the final conditions
\begin{subequations}\label{eq:adj_final_cond}
    \begin{align}
        &\z(T) = b_2(\theta^*(T) - \theta_{\Omega}), \label{pier-f1}\\
        &\p(T) + \tau\q(T) = b_4 (\phi^*(T) - \phi_{\Omega}) -  \ell b_2 (\theta^*(T) - \theta_{\Omega}),\label{pier-f2}\\
        &\r(T) = 0.\label{pier-f3}
    \end{align}
\end{subequations}

\begin{thm}
\label{prop:adj_wellposedness}
    Assume that the hypotheses \ref{hyp:constants}--\ref{hyp:cost-fun} hold and let $\tau$ be greater than or equal to $0$. In addition, fix a control $u^* \in \Uad$  with corresponding state $(z^*, \phi^*, \mu^*, \sigma^*)$. Then, the adjoint system \eqref{eq:adj_system}--\eqref{eq:adj_final_cond} has a unique weak solution $(\z, \p, \q, \r)$ such that
    \begin{gather*}
        \z \in H^1(0,T; \Hs) \cap C^0([0,T];\Vs) \cap L^2(0,T; \Ws)\\
        \p + \tau \q \in H^1(0,T; \Vs')\cap C^0([0,T]; \Hs), \quad  \q \in L^2(0,T; \Vs),\\
        \p \in L^\infty(0,T; \Vs) \cap L^2(0,T;\Ws\cap H^3(\Omega)),\\
        \r \in H^1(0,T; \Hs) \cap C^0([0,T];\Vs) \cap L^2(0,T; \Ws),
    \end{gather*}
    which satisfies the final conditions~\eqref{eq:adj_final_cond} a.e.~in $\Omega$ and fulfills \eqref{eq:adj_system}--\eqref{eq:adj_bound_cond} in variational sense, i.e., it satisfies
    \begin{subequations}
    \small
        \begin{align}
            &\begin{aligned}
                 &{}-\duality{\partial_t \z}{v}_{\Vs} + \into \nabla \z \cdot \nabla v \dx\\
                 &\quad = \into \big[\Lambda \q - \lambdae \h(\phi^*) \p - \lambdad \sigma^*\k'(\theta^*)\r + b_1 (\theta^* - \theta_Q)\big] v\dx ,
            \end{aligned}
            \label{eq:adj_var_temp}\\
            &\begin{aligned}
                 &{}-\duality{\partial_t (\p + \tau \q  + \ell \z)}{v}_{\Vs}  + \into \nabla(\q - \chi \r )\cdot \nabla v \dx
                 = \into (\lambdap \sigma^* - \lambdaa - \lambdae \theta^*)\h'(\phi^*)\p v
                 \\
                 &\quad{} + \into \bigl[ - f'(\phi^*)\q - \lambdac \sigma^* \h'(\phi^*)\r + b_3(\phi^* - \phi_Q)\bigr] v \dx,
            \end{aligned}\label{eq:adj_var_ch1}\\
            & \into \nabla \p \cdot \nabla v \dx = \into \q v \dx, \label{eq:adj_var_ch2}\\
            &\begin{aligned}
            &-\duality{\partial_t \r}{v}_{\Vs} + \into \nabla \r \cdot \nabla v \dx \\
            &\quad{} = \into \big[\lambdap  \h(\phi^*) \p + \chi \q- \lambdac \h(\phi^*)\r - \lambdab \r - \lambdad \k(\theta^*)\r\big] v \dx ,     
            \end{aligned}
            \label{eq:adj_var_nutrient}
        \end{align}
    \end{subequations}
    for all  $v\in\Vs$, a.e.~in $(0,T)$.
\end{thm}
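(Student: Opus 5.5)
We will follow the same strategy as for \Cref{prop:existence_lin}: a Faedo--Galerkin discretisation, uniform a priori estimates at the discrete level, and passage to the limit. To reduce to a forward problem, we reverse time, $t\mapsto T-t$. Then \eqref{eq:adj_system} becomes a forward linear system with coefficients depending on the optimal state. These coefficients are bounded owing to \eqref{eq:solution_estimate}, except $\sigma^*$, which we only know in $L^\infty(0,T;\Vs)\cap L^2(0,T;\Ws)$.

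We first eliminate $\q$ through \eqref{eq:adj_ch2}, $\q=-\Delta\p$. Then \eqref{eq:adj_ch1} reads
\[
\partial_t\bigl(\p-\tau\Delta\p+\ell\z\bigr)+\Delta^2\p+\chi\Delta\r=\dots .
\]
This is a (viscous) Cahn--Hilliard-type equation in the variable $\p$. The final datum is $\p(T)+\tau\q(T)=g\in\Vs$, and we obtain $\p(T)$ by solving the elliptic problem $(I-\tau\Delta)\p(T)=g$; for $\tau=0$ this gives $\p(T)=g$. It is consistent with the paper's conventions to read \eqref{pier-f2} in this sense.

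The estimates are chosen so that they mirror those of the linearised system. The operator of the adjoint system is the formal transpose of the linearised one, so the cancellations should reappear.

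\textbf{First estimate.} We test \eqref{eq:adj_var_ch1} by $\p$ and use $\into\nabla\q\cdot\nabla\p=\into|\q|^2$ (from \eqref{eq:adj_var_ch2} with $v=\q$, justified at the Galerkin level). This yields
\[
\tfrac12\tfrac{\mathrm d}{\mathrm dt}\bigl(\|\p\|_H^2+\tau\|\nabla\p\|_H^2\bigr)+\|\q\|_H^2 .
\]
The coupling term $\ell\partial_t\z\,\p$ will be handled by testing \eqref{eq:adj_var_temp} with $\z$ together with a time-integrated (convolution) trick analogous to the $R(1\ast_t\zeta)$ device used in the first estimate of \Cref{prop:existence_lin}. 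Alternatively, we test \eqref{eq:adj_var_temp} by $-\partial_t\z$ to control $\|\partial_t\z\|_{L^2(H)}$ and then bound $\ell\into\partial_t\z\,\p$ by Young's inequality. We also test \eqref{eq:adj_var_nutrient} by $\r$. The chemotaxis terms $\chi\into\nabla\r\cdot\nabla\p=\chi\into\r\q$ and $\chi\into\q\r$ are absorbed by Young's inequality.

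The dangerous products are $\sigma^*\h'(\phi^*)\r\p$, $\sigma^*\k'(\theta^*)\r\z$ and $(\lambdap\sigma^*-\dots)\h'(\phi^*)\p^2$. We bound them by $\|\sigma^*\|_{\Ws}\|\cdot\|_H\|\cdot\|_H$, or by $\|\sigma^*\|_{L^4}\|\cdot\|_{L^4}$ together with interpolation. The term $f'(\phi^*)\q\p$ is bounded by $\epsilon\|\q\|^2+C\|\p\|^2$. Gronwall's lemma, with the $L^1(0,T)$ weight $\|\sigma^*\|_{\Ws}^2$, then gives bounds for $\p\in L^\infty(H)$, $\tau^{1/2}\p\in L^\infty(V)$, $\q\in L^2(H)$, $\z\in L^\infty(V)\cap H^1(H)$ and $\r\in L^\infty(H)\cap L^2(V)$. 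The final data contribute $\|\theta^*(T)-\theta_\Omega\|_V$ and $\|\phi^*(T)-\phi_\Omega\|_V$, which are finite by \ref{hyp:cost-fun} and \eqref{pcol1}--\eqref{pcol2}. Elliptic regularity applied to $-\Delta\p=\q$ then gives $\p\in L^2(\Ws)$.

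\textbf{Higher estimates.} Testing \eqref{eq:adj_var_ch1} by $\q$ to get $\q\in L^2(V)$ is the step I expect to be the main obstacle. The identity $\into\partial_t\p\,\q=\frac12\frac{\mathrm d}{\mathrm dt}\|\nabla\p\|^2$ helps, and the viscous part gives $\frac\tau2\frac{\mathrm d}{\mathrm dt}\|\q\|^2$. However, the cross term $\ell\partial_t\z\,\q$ requires $\partial_t\z\in L^2(H)$ from the previous step. The coefficient terms, such as $\sigma^*\h'(\phi^*)\r\q$, are handled with $\sigma^*\in L^\infty(L^6)$ and $\r\in L^2(L^6)$.

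For $\tau=0$ there is no $\|\q\|^2$ time derivative, so the bound has to come from $\|\nabla\q\|^2$ alone. The troublesome contribution is then $\ell\into\partial_t\z\,\q$. For it we would test \eqref{eq:adj_var_temp} by $-\partial_t\z$, multiplied by a large constant, to absorb it, with $\Lambda\q$ appearing on the right. This is consistent because $\q$ is controlled in $L^2(H)$ from the first estimate.

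With $\nabla\q\in L^2(H)$, elliptic regularity yields $\p\in L^2(H^3)$, while standard parabolic estimates (testing by $-\Delta\r$, $\partial_t\r$, and similarly for $\z$) give the stated $H^1(H)\cap C^0(V)\cap L^2(W)$ regularity for $\z$ and $\r$. Comparison in \eqref{eq:adj_var_ch1} gives $\partial_t(\p+\tau\q)\in L^2(V')$, and hence continuity in $H$.

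\textbf{Limit and uniqueness.} Passage to the limit is straightforward by linearity and weak compactness. Uniqueness follows by applying the first estimate to the difference of two solutions, which solves the system with zero data.
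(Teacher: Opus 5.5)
Your proof works, but your basic energy estimate follows a different route from the paper's.

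\textbf{The paper's route.} The paper never eliminates $\q$. It works with the combined unknown $\p+\tau\q+\ell\z$, whose time derivative appears in \eqref{eq:adj_var_ch1} and whose final value is exactly \eqref{pier-f2}. It tests \eqref{eq:adj_var_ch1} by $\p+\tau\q+\ell\z$ and \eqref{eq:adj_var_temp} by $\frac{\ell}{\Lambda}(\z-\Delta\z)$. Then $\ell\int_\Omega\nabla\q\cdot\nabla\z$ cancels exactly against the $-\ell\int_\Omega\q\,\Delta\z$ produced by $\Lambda\q$. This gives $\z\in L^\infty(V)\cap L^2(W)$, $\p+\tau\q+\ell\z\in L^\infty(H)$ and $\q\in L^2(H)$. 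The price is threefold:
\begin{itemize}
\item $\p$ is recovered only in $L^2(H)$, by subtraction;
\item $\partial_t\z\in L^2(H)$ comes only afterwards, by comparison;
\item a large weight $R$ on the $\r$-test is needed to absorb $\chi\tau\nabla\r\cdot\nabla\q$.
\end{itemize}

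\textbf{Your route.} You eliminate $\q=-\Delta\p$, test by $\p$ alone, and control the coupling $\ell(\partial_t\z,\p)$ by a maximal-regularity test of \eqref{eq:adj_var_temp} by $-\partial_t\z$. This needs no structural cancellation. It yields $\p\in L^\infty(H)$ and $\partial_t\z\in L^2(H)$ at once. It needs no large $R$, since the only chemotactic term left is $\chi(\r,\q)$, obtained via \eqref{eq:adj_var_ch2}.

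You should state one routine weighting explicitly. The $\z$-test must carry a factor $K$ with $K\Lambda^2$ small, so that $K\Lambda(\q,\partial_t\z)$ is absorbed by $\|\q\|_H^2$ from the $\p$-test. After that, $\ell(\partial_t\z,\p)\le\frac K8\|\partial_t\z\|_H^2+\frac{C}{K}\|\p\|_H^2$ is harmless, because $\|\p\|_H^2$ is a Gronwall quantity. The bound on the term with $\sigma^*\r\,\partial_t\z$ uses the $L^1(0,T)$ weight $\|\sigma^*\|_W^2$, as you anticipated.

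Your first alternative, the convolution trick, does not transfer verbatim. In the adjoint system the time-derivative coupling sits in the $\p$-equation, not in the $\z$-equation. Rely on the $-\partial_t\z$ version.

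Your higher estimate, testing by $\q$, coincides with the paper's \eqref{pier17}--\eqref{pier19}. The term $\ell(\partial_t\z,\q)$ is not an obstacle even when $\tau=0$: both factors are already bounded in $L^2(0,T;H)$ after the first estimate. This is exactly how the paper treats it, by putting $\ell\partial_t\z$ into $g$ in \eqref{pier16}. The extra re-test you propose there is redundant but harmless.
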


\begin{proof}
First of all, we observe that the backward in time system \eqref{eq:adj_system}--\eqref{eq:adj_final_cond} is linear. Therefore, once the existence of solutions has been established by means of suitable energy estimates (cf. the subsequent estimate~\eqref{eq:adj_est_3}), uniqueness follows immediately by linearity.
Concerning existence, a rigorous argument would require a suitable approximation procedure. In particular, one may discretise the system in space by means of the Faedo--Galerkin scheme based on eigenfunctions of the operator $-\Delta$ with homogeneous Neumann boundary conditions, then prove the existence of discrete solutions by Carathéodory's theorem, derive a priori estimates that are uniform with respect to the discretisation parameter, and finally pass to the limit by compactness arguments. Since this procedure is standard in the analysis of linear parabolic systems, we limit ourselves to formally deriving the necessary a priori estimates at the continuous level, which can be justified within the aforementioned approximation framework.
Let $R > 0$ be a fixed (large) constant, whose value will be chosen later. 
We multiply equation \eqref{eq:adj_var_temp} by $\ell/\Lambda $ and test it with $v = \z - \Delta \z $, then we test equation~\eqref{eq:adj_var_ch1} with $v = \p + \tau \q + \ell \z$ and equation~\eqref{eq:adj_var_nutrient} with $v = R \r $. Summing the resulting equalities, integrating by parts in some term, and using \eqref{eq:adj_var_ch2} tested by $v=\q$,  we obtain
{\allowdisplaybreaks
\begin{align*}
    & {}-\frac{\ell}{2\Lambda} \difft \norm{\z}_{\Vs}^2 +\frac{\ell}{\Lambda} \into \bigl( |\nabla \z|^2 + |\Delta \z|^2\bigr)\dx - \frac{1}{2} \difft \into |\p + \tau \q + \ell \z|^2 \dx \\
    &\quad \quad + \into |\q|^2 \dx+ \tau \into |\nabla \q|^2 \dx -\frac{R}{2} \difft  \into |\r|^2 \dx+ R \into |\nabla \r|^2 \dx \\
    &\quad = \into \biggl[\ell \q (\z - \Delta \z) + \frac{\ell}{\Lambda} \bigl(- \lambda_E \h(\phi^*)\p - \lambda_D \sigma^* \k'(\theta^*)\r + b_1(\theta^* - \theta_Q) \bigr) (\z - \Delta \z)\biggr] \dx\\
    &\quad \quad {}+ \into \bigl[ \ell\, \q \, \Delta \z + \chi \nabla \r \cdot ( \nabla \p + \tau \nabla \q + \ell \,\nabla \z ) \bigr]\dx\\
    &\quad \quad + \into \Bigl\{\left[ (\lambda_P \sigma^* - \lambda_A - \lambda_E \theta^*)\h'(\phi^*)\p\right](\p + \tau \q + \ell \z) \\
    &\quad \qquad\qquad {}+ \left[ - f'(\phi^*)\q - \lambda_C \sigma^* \h'(\phi^*)\r 
    + b_3(\phi^* - \phi_Q) \right](\p + \tau \q + \ell \z)\Bigr\} \dx\\
    &\quad \quad + R \into \Bigl[ \lambda_P \h(\phi^*)\p + \chi \q - \lambda_C \h(\phi^*)\r - \lambda_D \k(\theta^*)\r -\lambdab \r \Bigr] \r \dx.
\end{align*}}
Now, we note that two terms on the \rhs, the ones involving $\q\, \Delta \z$, cancel each other. Then, we integrate the above identity with respect to time over the interval $(t,T)$ and exploit the final conditions \eqref{eq:adj_final_cond}. This yields
\begin{equation}\label{eq:adj_est_10}
    \begin{split}
        &\frac{1}{2} \left[ \frac{\ell}{\Lambda}\norm{\z}_{\Vs}^2 + \into |\p + \tau \q + \ell \z|^2 \dx + R \into |\r|^2 \dx \right] + \frac{\ell}{\Lambda} \inttTo \bigl( |\nabla \z|^2 + |\Delta \z|^2\bigr) \dx \ds \\
        &\quad \quad + \inttTo |\q|^2 \dx \ds + \tau \inttTo |\nabla \q|^2 \dx \ds + R \inttTo  |\nabla \r|^2 \dx \ds \\
        &\quad= \frac{1}{2} \left[\frac{\ell}{\Lambda}  b_2^2 \, \norm{\theta^*(T) - \theta_{\Omega}}^2_{\Vs} + b_4^2 \into |\phi^*(T) - \phi_{\Omega}|^2 \dx \right] + I_1 + \dots + I_4.
    \end{split}
\end{equation}
In what follows, we estimate the terms $I_1, \dots, I_4$, keeping explicit track of the parameter $\tau$. 
A recurrent argument consists in adding and subtracting the quantity $\tau \q + \ell \z$ whenever the variable $\p$ appears alone, so as to recover the combination $\p + \tau \q + \ell \z$ and pair it with the corresponding term on the left-hand side.
Concerning the term $I_1$, we write~it~as
\begin{equation*}
    \begin{split}
        I_1 &= \inttTo\biggl[  \ell \q \z + \frac{\ell}{\Lambda}\bigl(- \lambda_E \h(\phi^*)(\p + \tau \q + \ell \z) \bigr)  (\z - \Delta \z)\biggr]  \dx\ds\\
        &\quad + \tau \frac{\ell}{\Lambda} \inttTo\lambda_E \h(\phi^*) \q  (\z - \Delta \z) \dx\ds  + \frac{\ell^2}{\Lambda} \inttTo\lambda_E \h(\phi^*)\z (\z - \Delta \z)  \dx\ds\\
        &\quad + \frac{\ell}{\Lambda} \inttTo\left[ - \lambdad \sigma^* \k'(\theta^*)\r + b_1(\theta - \theta_Q) \right] (\z - \Delta \z) \dx\ds.
    \end{split}
\end{equation*}
In order to estimate $I_1$, we exploit the boundedness of $\h$ and $\k'$, ensured by assumption~\ref{hyp:nonlinearities}, and apply H\"older’s and Young’s inequalities. Owing to the continuous embedding $\Vs \hookrightarrow L^4(\Omega)$ and to the estimate~\eqref{eq:solution_estimate}, we first handle the second integral on the \rhs\ by splitting it into three terms:  
\begin{align*}
    &\tau \frac{\ell}{\Lambda} \inttTo\lambda_E \h(\phi^*) \q \z   \dx\ds
    + \tau \frac{\ell}{\Lambda} \inttTo\lambda_E \h'(\phi^*) \q \nabla \phi^* \cdot  \nabla \z \dx\ds \\ 
    &\quad \quad + \tau \frac{\ell}{\Lambda} \inttTo\lambda_E \h(\phi^*) \nabla \q \cdot  \nabla \z \dx\ds \\
    &\quad \leq \frac1{32} \inttTo |\q|^2 \dx \ds + C \tau^2 \inttTo|\z|^2 \dx\ds \\
    &\quad\quad + C \tau \inttT \norm{\q}_{L^4(\Omega)} \norm{\phi^*}_{L^4(\Omega)} \norm{\nabla \z}_H\ds + C \tau \inttT \norm{\nabla \q}_{H} \norm{\nabla \z}_H\ds \\
    &\quad\leq \frac1{16} \inttTo |\q|^2 \dx \ds + \frac\tau 8 \inttTo |\nabla \q|^2 \dx \ds
    + C \inttT \bigl(1+\tau^2\bigr)  \norm{\z}_V^2\ds.   
\end{align*}
At this point, using the continuity of the  embedding $\Ws \hookrightarrow L^\infty (\Omega)$ as well, it is not difficult to check that
\begin{equation}\label{eq:adj_est_11}
    \begin{split}
        I_1 &\leq \frac18 \inttTo |\q|^2 \dx \ds + \frac\tau 8 \inttTo |\nabla \q|^2 \dx \ds+ \frac{\ell}{2\Lambda} \inttTo |\Delta \z|^2 \dx \ds \\
       &\quad   + C\inttTo |\p + \tau \q + \ell \z|^2 \dx \ds  + C \inttTo |\r|^2 \dx\ds\\
        &\quad  + C \inttTo |\theta^* - \theta_Q|^2 \dx\ds + C \inttT \bigl(1+ \tau^2 + \norm{\sigma^*}_{\Ws}^2 \bigr)  \norm{\z}_V^2\ds.
    \end{split}
\end{equation}
We now turn to the estimate of $I_2 \displaystyle =\inttTo  \chi \nabla \r \cdot ( \nabla \p + \tau \nabla \q + \ell \,\nabla \z )  \dx \ds$. First, we observe that
\begin{equation*}
    \chi \inttTo \nabla \r \cdot \nabla \p \dx \ds = \chi \inttTo \q \r \dx \ds,
\end{equation*}
thanks to \eqref{eq:adj_var_ch2} tested with $\r$ and integrated in time. Once again, by H\"older’s and Young’s inequalities, we infer that 
\begin{equation}\label{eq:adj_est_12}
    \begin{split}
    I_2 &\leq \frac18 \inttTo |\q|^2 \dx \ds + 2 \chi^2 \inttTo |\r|^2 \dx \ds  + \frac{\tau}{8} \inttTo |\nabla \q|^2 \dx \ds \\
    &\quad  + 2 \tau \chi^2 \inttTo |\nabla \r|^2 \dx\ds + \frac{R}{2} \inttTo |\nabla \r|^2 \dx \ds + \frac{(\chi \ell)^2}{2R} \inttTo |\nabla \z|^2 \dx \ds.
    \end{split}
\end{equation}%
We now estimate the term $I_3$. As in the previous cases, we first rewrite the terms involving $\p$ so as to make the combination $\p + \tau \q + \ell \z$ appear explicitly. We then exploit the boundedness of $\h'$, ensured by assumption \ref{hyp:nonlinearities}, the continuity of $f'$ given by assumption \ref{hyp:beta_pi}, as well as the uniform boundedness of $\theta^*$ and $\phi^*$ provided by \Cref{thm:wellposedness}. By H\"older’s and Young’s inequalities, we obtain
\begin{equation}\label{eq:adj_est_13}
    \begin{split}
        I_3 &= \inttTo (\lambda_P \sigma^* - \lambda_A - \lambda_E \theta^*)\h'(\phi^*) |\p + \tau \q + \ell \z|^2 \dx \ds \\
        &\quad - \inttTo (\lambda_P \sigma^* - \lambda_A - \lambda_E \theta^*) (\tau \q + \ell \z)(\p + \tau \q + \ell \z) \dx \ds \\
        &\quad + \inttTo \left[ -f'(\phi^*)\q - \lambda_C \sigma^* \h'(\phi^*)\r + b_3(\phi^* - \phi_Q) \right] (\p + \tau \q + \ell \z) \dx \ds \\
        &\leq C\bigl(1 + \tau^2\bigr)\inttT \bigl(\norm{\sigma^*}_{\Ws}^2+ 1\bigr) \into |\p + \tau \q + \ell \z|^2 \dx \ds + \frac18 \inttTo |\q|^2 \dx \ds \\
        &\quad + C \inttTo |\z|^2 \dx \ds + C\inttTo |\r|^2 \dx \ds + C \inttTo |\phi^* - \phi_Q|^2 \dx\ds.
    \end{split}
\end{equation}
Finally, we analyse the term $I_4$. We rewrite the first contribution as before, so as to make the quantity $\p + \tau \q + \ell \z$ appear. Moreover, we observe that the last three terms inside the brackets, once multiplied by $\r$, are nonpositive and can therefore be neglected in the inequality. By arguments analogous to those used above, we deduce that
\begin{equation}\label{eq:adj_est_14}
    \begin{split}
        I_4 &\leq R \inttTo \left[ \lambda_P \h(\phi^*)(\p + \tau \q + \ell \z) - \lambda_P \h(\phi^*)(\tau \q + \ell \z) + \chi \q \right] \r \dx \ds\\
        &\leq C R \inttTo |\p + \tau \q + \ell \z|^2 \dx \ds 
        + CR\bigl(1 + \tau^2R +R\bigr)\inttTo |\r|^2 \dx \ds \\
        &\quad + CR\inttTo |\z|^2 \dx \ds  + \frac18 \inttTo |\q|^2 \dx \ds.
    \end{split}
\end{equation}%
We now return to equality \eqref{eq:adj_est_10} and make use of the estimates \eqref{eq:adj_est_11}--\eqref{eq:adj_est_14}. Rearranging the terms and using the estimate~\eqref{eq:solution_estimate} and the assumptions in \ref{hyp:cost-fun}, we obtain
{%
\allowdisplaybreaks
\begin{align*}
    &\frac{1}{2} \left[ \frac{\ell}{\Lambda}\norm{\z}_{\Vs}^2 + \into |\p + \tau \q + \ell \z|^2 \dx + R \into |\r|^2 \dx \right] + \frac{\ell}{2\Lambda} \inttTo \bigl( |\nabla \z|^2 + |\Delta \z|^2\bigr)\dx \ds \\
    &\quad \quad + \frac12 \inttTo |\q|^2 \dx \ds + \frac\tau 2\inttTo |\nabla \q|^2 \dx \ds + \Bigl(\frac R 2 - 2 \tau \chi^2  \Bigr)\inttTo  |\nabla \r|^2 \dx \ds \\
    &\quad{}\leq C 
    + C\inttT \bigl(1+ \tau^2 \bigr)\bigl(1 + \norm{\sigma^*}_{\Ws}^2 \bigr) \into |\p + \tau \q + \ell \z|^2 \dx \ds 
    + C \inttTo |\r|^2 \dx\ds \\
    &\quad\quad{} + C \inttT \bigl(1+ \tau^2 \bigr)\bigl(1 + \norm{\sigma^*}_{\Ws}^2 \bigr)  \norm{\z}_V^2\ds + C R \inttTo |\p + \tau \q + \ell \z|^2 \dx \ds \\
    &\quad\quad{} + CR\bigl(1 + \tau^2R +R\bigr)\inttTo |\r|^2 \dx \ds 
    + CR\inttTo |\z|^2 \dx \ds . 
\end{align*}%
}%
We now fix $R $ sufficiently large, namely $R > 4 \tau \chi^2 $, in order that the coefficient of the last term on the \lhs\ be positive.
Then, we can apply Gronwall’s lemma, taking into account that $\sigma^*$ is uniformly bounded in $L^2(0,T; \Ws)$ by a constant depending only on $M$, as ensured by \Cref{thm:wellposedness}. In view of~\eqref{pier4} we find out that
\begin{equation}\label{eq:adj_est_3}
    \begin{split}
        &\norm{\z}_{L^{\infty}(\Vs) \cap L^2(\Ws)} + \norm{\p + \tau \q + \ell \z}_{L^{\infty}(\Hs)} \\
        &\quad{}+ \norm{\q}_{L^2(\Hs)} + \tau^{1/2} \norm{\q}_{L^2(\Vs)} + \norm{\r}_{L^{\infty}(\Hs) \cap L^2(\Vs)}  \leq C_{M}.
    \end{split}
\end{equation}
Hence, we can also infer that 
\begin{equation}\label{eq:adj_est_4}
    \norm{\p}_{L^2(\Hs)} \leq C \left(\norm{\p + \tau \q + \ell \z}_{L^{\infty}(\Hs)} + \tau \norm{\q}_{L^2(\Hs)} + \ell \norm{\z}_{L^\infty(\Hs)}\right) \leq C_{M}.
\end{equation}
Then, a careful inspection of the terms in \eqref{eq:adj_var_temp} 
(cf. also \eqref{eq:adj_temp}), along with 
the bounds in \eqref{eq:adj_est_3}, shows that $-\partial_t \z = \Delta \z + \Lambda \q - \lambdae \h(\phi^*) \p - \lambdad \sigma^*\k'(\theta^*)\r + b_1 (\theta^* - \theta_Q)$ is bounded in $L^2(0,T;\Hs)$, whence 
\begin{equation} \label{pier10}
    \norm{\z}_{H^1(\Hs)\cap L^{\infty}(\Vs) \cap L^2(\Ws)} \leq C_{M}.
\end{equation}
Also, we note that the \rhs\ of equation~\eqref{eq:adj_var_nutrient} (see~\eqref{eq:adj_nutrient} as well) is bounded in $L^2(0,T;\Hs)$ and the final value $\r(T)=0$ is fixed in $\Vs$. 
Therefore, one can apply a well-known parabolic regularity estimate (see, e.g., \cite[Chapter~3]{Lions_Magenes_12}) and deduce that 
\begin{equation}\label{pier14}
    \norm{\r}_{H^1(\Hs)\cap L^{\infty}(\Vs) \cap L^2(\Ws)} \leq C_{M}.
\end{equation}
Moreover, in the light of \eqref{eq:adj_est_3} and \eqref{eq:adj_est_4}, by \eqref{pier4} and elliptic regularity applied to the equation \eqref{eq:adj_var_ch2} (or \eqref{eq:adj_ch2}), it follows that
\begin{equation*}
    \norm{\p}_{L^2(\Ws)} + \tau^{1/2} \norm{\p}_{L^2(H^3(\Omega))}  \leq C_{M}.
\end{equation*}
At this point, we can rewrite~\eqref{eq:adj_var_ch1} as
\begin{align}\label{pier16}
        &{}-\duality{\partial_t (\p + \tau \q)}{v}_{\Vs} + \into \nabla\q \cdot \nabla v \dx
        = \into g v              
\end{align}
for all $v\in \Vs$, a.e.~in $(0,T)$, where 
\begin{align*}
&g = \ell \partial_t \z +\chi \Delta \r + (\lambdap \sigma^* - \lambdaa - \lambdae \theta^*)\h'(\phi^*)\p  \\
&\quad \quad - f'(\phi^*)\q - \lambdac \sigma^* \h'(\phi^*)\r + b_3(\phi^* - \phi_Q) 
 \end{align*}
is already known to be bounded in $L^2(0,T;\Hs)$ (cf.~\eqref{pier10} and \eqref{pier14}). Now, we aim to take $v=\q$ in \eqref{pier16} and integrate in time from $t$ to $T$. With the help of \eqref{eq:adj_var_ch2} we obtain
\begin{equation}\label{pier17}
    \begin{split}
        &\frac{1}{2} \left[ \into |\nabla \p |^2 \dx + \tau \into |\q|^2 \dx \right] +  \inttTo |\nabla \q|^2  \dx \ds \\
        &\quad\leq  \frac{1}{2} \left[ \into |\nabla \p (T) |^2 \dx + \tau \into |\q (T)|^2 \dx \right] + \norm{g}_{L^2(\Hs)} \norm{\q}_{L^2(\Hs)}.
    \end{split}
\end{equation}
In order to bound the first part of the \rhs\ we test \eqref{pier-f2} by $ \q(T) = - \Delta \p(T)$ (see~\eqref{eq:adj_var_ch2}) and integrate by parts so that
\begin{equation}\label{pier18}
    \begin{split}
&\into |\nabla \p (T) |^2 \dx + \tau \into |\q (T)|^2 \dx \\
&\quad =  \into \nabla \bigl( b_4 (\phi^*(T) - \phi_{\Omega}) -  \ell b_2 (\theta^*(T) - \theta_{\Omega}) \bigr) \cdot \nabla \p(T) \dx \\
        &\quad\leq   \frac{1}{2} \into \big|\nabla \bigl( b_4 (\phi^*(T) - \phi_{\Omega}) -  \ell b_2 (\theta^*(T) - \theta_{\Omega}) \bigr)  \big|^2 \dx
        + \frac{1}{2} \into |\nabla \p (T) |^2 \dx  .     
   \end{split}
\end{equation}
Then, as $ b_4 (\phi^*(T) - \phi_{\Omega}) -  \ell b_2 (\theta^*(T) - \theta_{\Omega})$ is bounded in $V$ due to \eqref{eq:solution_estimate} and \ref{hyp:cost-fun}, the \rhs\ of \eqref{pier17} is bounded as well. Hence, in view of \eqref{eq:adj_est_3} we deduce that
\begin{equation}\label{pier19}
    \begin{split}
        & \norm{\nabla \p}_{L^{\infty}(\Hs)} + 
         \tau^{1/2} \norm{\q}_{L^\infty (\Hs)} + \norm{\q}_{ L^2(\Vs)}  \leq C_{M}.
    \end{split}
\end{equation}
Thanks to~\eqref{pier19}, the estimate \eqref{eq:adj_est_4} can be improved to $L^\infty (0,T;\Hs)$ for $\p$, so that, also by \eqref{eq:adj_ch2} and elliptic regularity, 
\begin{equation*}
    \begin{split}
        & \norm{\p}_{L^{\infty}(\Vs)} + 
         \tau^{1/2} \norm{\p}_{L^\infty (\Ws)} + \norm{\p}_{ L^2(H^3(\Omega))}  \leq C_{M}.
    \end{split}
\end{equation*}
Finally, by comparison in the equation \eqref{pier16} we infer that
\begin{equation*}
   \norm{\partial_t (\p + \tau \q)}_{L^2(\Vs')} \leq C_{M}.
\end{equation*}
In particular, note that $\p+\tau \q$ is continuous from $[0,T]$ to $\Hs$ as it belongs to $H^1 (0,T;\Vs') \cap L^2 (0,T;\Vs)$.
\end{proof}

\subsection{Conclusions}

We now rewrite the optimality conditions in terms of the solution to the adjoint problem.
\begin{thm}[First-order necessary conditions of optimality]
Assume that the hypotheses \ref{hyp:constants}--\ref{hyp:cost-fun} hold and let $\tau$ be greater than or equal to $0$. Let $u^* \in \Uad$ be an optimal control with associated optimal state $(\theta^*, \phi^*, \mu^*, \sigma^*)$. Then, for every $u \in \Uad$ it holds that
    \begin{equation}\label{eq:opt_cond}
        \intTo (\z + b_5 u^*)(u-u^*) \dx \dt \geq 0,
    \end{equation}
    where $\z$ is the first component of the solution $(\z, \p, \q, \r)$ to
    the adjoint system \eqref{eq:adj_system}--\eqref{eq:adj_final_cond} related to $u^*$.
\end{thm}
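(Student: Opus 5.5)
We start from the variational inequality \eqref{eq:opt_cond0}, with $h=u-u^*$ and $(\zeta,\xi,\eta,\rho)$ the solution of the linearised system given by \Cref{prop:existence_lin} at $u^*$. The aim is the duality identity
\begin{equation*}
b_1 \intTo (\theta^*-\theta_Q)\zeta + b_2\into(\theta^*(T)-\theta_\Omega)\zeta(T) + b_3\intTo(\phi^*-\phi_Q)\xi + b_4\into(\phi^*(T)-\phi_\Omega)\xi(T) = \intTo \z\, h .
\end{equation*}
Inserting it into \eqref{eq:opt_cond0} gives \eqref{eq:opt_cond}.

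To prove the identity we test each linearised equation by the matching adjoint variable, integrate over $(0,T)$, and add:
\begin{itemize}
\item \eqref{eq:lin_temp_weak} with $v=\z$;
\item \eqref{eq:lin_ch1_weak} with $v=\p$;
\item \eqref{eq:lin_ch2_weak} with $v=\q$;
\item \eqref{eq:lin_nutrient_weak} with $v=\r$.
\end{itemize}
Symmetrically, we test the adjoint equations by the linearised variables:
\begin{itemize}
\item \eqref{eq:adj_var_temp} with $v=\zeta$;
\item \eqref{eq:adj_var_ch1} with $v=\xi$;
\item \eqref{eq:adj_var_nutrient} with $v=\rho$;
\item \eqref{eq:adj_var_ch2} with $v=\eta$.
\end{itemize}
Subtracting the two sums, the spatial gradient terms and the zero-order coupling terms cancel pairwise. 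Examples are $\Lambda\zeta\q$, $\chi\rho\q$, $\lambdae\h(\phi^*)\zeta\p$, $\lambdad\sigma^*\k'(\theta^*)\zeta\r$, $f'(\phi^*)\xi\q$ and $\lambdap\h(\phi^*)\rho\p$. The Laplacian couplings also cancel:
\begin{itemize}
\item $\int\nabla\eta\cdot\nabla\p$ from \eqref{eq:lin_ch1_weak} against $\int\q\eta$ via \eqref{eq:adj_var_ch2};
\item $\int\nabla\xi\cdot\nabla\q$ against the $\nabla\q\cdot\nabla\xi$ term of \eqref{eq:adj_var_ch1};
\item $-\chi\int\nabla\xi\cdot\nabla\r$ against $-\chi\int\nabla\r\cdot\nabla\xi$.
\end{itemize}
Only the forcing terms $\int h\z$, $b_1\int(\theta^*-\theta_Q)\zeta$ and $b_3\int(\phi^*-\phi_Q)\xi$ survive, together with the time-derivative pairings.

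The time-derivative pairings take the form
\begin{equation*}
\int_0^T\big[\langle\partial_t(\zeta+\ell\xi),\z\rangle+\langle\partial_t\xi,\p+\tau\q\rangle+\langle\partial_t\rho,\r\rangle + \langle\partial_t\z,\zeta\rangle+\langle\partial_t(\p+\tau\q+\ell\z),\xi\rangle+\langle\partial_t\r,\rho\rangle\big]\dt .
\end{equation*}
Using the integration-by-parts formula in time, each pair collapses to boundary values at $t=T$, since the linearised initial data vanish. With $\r(T)=0$ and \eqref{pier-f1}--\eqref{pier-f2}, we get
\begin{equation*}
\into \z(T)\zeta(T) + \into\big(\p(T)+\tau\q(T)+\ell\z(T)\big)\xi(T) = b_2\into(\theta^*(T)-\theta_\Omega)\zeta(T) + b_4\into(\phi^*(T)-\phi_\Omega)\xi(T).
\end{equation*}
The $\ell b_2$ contribution is exactly compensated by $\ell\z(T)\xi(T)$. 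This produces the final-time terms of the cost and proves the identity.

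The main obstacle is justifying these manipulations with the available regularity, especially the pairing of $\partial_t\xi$ with $\p+\tau\q$ and of $\partial_t(\p+\tau\q)$ with $\xi$.

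For $\tau>0$ this is straightforward. The linearised solution has $\tau^{1/2}\partial_t\xi\in L^2(H)$, and the adjoint has $\q\in L^2(V)$ and $\partial_t(\p+\tau\q)\in L^2(V')$. The generalized product rule for $H^1(0,T;V')\cap L^2(0,T;V)$ functions then applies.

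For $\tau=0$ we have $\partial_t\xi\in L^2(V')$ and $\p\in L^2(W\cap H^3)$, so both factors of each product lie in $H^1(V')\cap L^2(V)$ and the same rule applies. The pointwise values $\xi(T)$ and $(\p+\tau\q)(T)$ make sense in $H$ by the $C^0([0,T];H)$ continuity proved earlier.

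If needed, one can first carry out the computation at the Galerkin level, or for regularized data, and pass to the limit. The remaining cancellations only involve zero-order terms, which are integrable thanks to the bounds \eqref{eq:solution_estimate}, \eqref{eq:lin_est3} and the adjoint estimates.
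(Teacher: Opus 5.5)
Your proposal is correct and follows the paper's proof essentially step by step. You use the same cross-testing of the linearised and adjoint equations, the same cancellations leading to $\frac{d}{dt}\into(\zeta\z+\ell\xi\z+\xi(\p+\tau\q)+\rho\r)\dx=\into h\z\dx-b_1\into(\theta^*-\theta_Q)\zeta\dx-b_3\into(\phi^*-\phi_Q)\xi\dx$, and the same use of the zero initial data and the final conditions, where the $\ell b_2$ terms compensate. Your justification via the product rule in $H^1(0,T;\Vs^*)\cap L^2(0,T;\Vs)$ is a detail the paper leaves implicit; the case split in $\tau$ is unnecessary, since $\p+\tau\q$ lies in that space for every $\tau\ge 0$.
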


\begin{proof}
     First, we recall the linearised equations~\eqref{eq:lin_problem_integrals} in which the increment $h$ in \eqref{eq:lin_temp_weak} is just $u-u^*$.
     Then, we proceed by testing \eqref{eq:lin_temp_weak} with $\z$, \eqref{eq:lin_ch1_weak} with $\p$, \eqref{eq:lin_ch2_weak} with $\q$, and \eqref{eq:lin_nutrient_weak} with $\r$, and summing the resulting identities. Next, we test the equations~\eqref{eq:adj_var_temp}, \eqref{eq:adj_var_ch1}, \eqref{eq:adj_var_ch2}, and \eqref{eq:adj_var_nutrient} with $\zeta$, $\xi$, $\eta$, and $\rho$, respectively, and subtract these from the previous sum to obtain, after some cancellations, the following equality:
    \begin{equation*}
        \begin{split}
            &\difft \left[ \into \left(\zeta\z + \ell \xi \z + \xi \p + \tau \xi \q + \rho \r \right)\dx \right]\\
            &\quad = \into (u-u^*)\z \dx - b_1 \into (\theta^* - \theta_Q) \zeta \dx - b_3 \into (\phi^* - \phi_Q)\xi \dx.
        \end{split}
    \end{equation*}
    Integrating in time over the interval $(0,T)$ and exploiting the initial/final conditions \eqref{eq:lin_init_cond}, \eqref{eq:adj_final_cond} leads to:
    \begin{equation}\label{eq:opt_cond_2}
        \begin{split}
            &b_2\into (\theta^*(T) -\theta_{\Omega}) \zeta(T) \dx + b_4\into (\phi^*(T) -\phi_{\Omega}) \xi(T) \dx\\
            &\quad = \intTo (u-u^*)\z \dx \dt - b_1 \intTo (\theta^* - \theta_Q) \zeta \dx \dt - b_3 \intTo (\phi^* - \phi_Q)\xi \dx \dt.
        \end{split}
    \end{equation}
    Finally, combining \eqref{eq:opt_cond_2} with the previously obtained \eqref{eq:opt_cond0}, the thesis follows.
\end{proof}

\begin{remark}
The inequality~\eqref{eq:opt_cond}, which holds for all $u \in \Uad$, shows that $u^*$ coincides with the $L^2(Q)$-projection of $-\z/b_5$ onto the closed and convex set $\Uad$.
\end{remark}

\section*{Acknowledgments}
The authors gratefully acknowledge partial support from the Next Generation EU Project No.~P2022Z7ZAJ (A Unitary Mathematical Framework for Modelling Muscular Dystrophies). 
G.C. is funded by the Deutsche Forschungsgemeinschaft (DFG, German Research Foundation) under Germany's Excellence Strategy – The Berlin Mathematics Research Center MATH+ (EXC-2046/2, project ID: 390685689).
All authors are members of GNAMPA (Gruppo Nazionale per l’Analisi Matematica, la Probabilità e le loro Applicazioni), part of INdAM (Istituto Nazionale di Alta Matematica).

\printbibliography

\end{document}